\newtheorem{thm}{Theorem}[section]
\newtheorem{prop}[thm]{Proposition}
\newtheorem{cor}[thm]{Corollary}
\newtheorem{defn}[thm]{Definition}
\newtheorem{ex}[thm]{Example}
\newtheorem{rem}[thm]{Remark}
\newtheorem{conj}[thm]{Conjecture}
\newtheorem{prob}[thm]{Problem}
\newcommand{\skipit}[1]{{}}
\newcommand{\prfend}{\hbox to7pt{\hfil}
\par\vskip-\baselineskip\hbox to\hsize
{\hfil\vbox {\hrule width6pt height6pt}}\vskip\baselineskip}
\newcommand{\ZZ}{\mathbb{Z}}
\newcommand {\PP}{\mathbb{P}}
\DeclareMathOperator{\Image}{Im}
\DeclareMathOperator{\per}{per}
\newcommand{\myarrow}[2]{\hbox to #1pt{\hfil$\to$\hfil}{\hskip-#1pt{\raise
10pt\hbox to#1pt{\hfil$\scriptscriptstyle #2$\hfil}}}}
\begin{document}

\title{On the coefficients of the permanent and the determinant of a circulant matrix. Applications}

\author[Liena Colartes]{Liena Colarte}
\address{Department de matem\`{a}tiques i Inform\`{a}tica, Universitat de Barcelona, Gran Via de les Corts Catalanes 585, 08007 Barcelona,
Spain}
\email{lcolargo8@alumnes.ub.edu}

\author[Emilia Mezzetti]{Emilia Mezzetti}
\address{Dipartimento di Matematica e  Geoscienze, Universit\`a di
Trieste, Via Valerio 12/1, 34127 Trieste, Italy}
\email{mezzette@units.it}

\author[Rosa M. Mir\'o-Roig]{Rosa M. Mir\'o-Roig}
\address{Department de matem\`{a}tiques i Inform\`{a}tica, Universitat de Barcelona, Gran Via de les Corts Catalanes 585, 08007 Barcelona,
Spain}
\email{miro@ub.edu}

\author[Mart\'{\i} Salat]{Mart\'{\i} Salat}
\address{Department de matem\`{a}tiques i Inform\`{a}tica, Universitat de Barcelona, Gran Via de les Corts Catalanes 585, 08007 Barcelona,
Spain}
\email{msalatmo7@alumnes.ub.edu}

\begin{abstract} Let $d(N)$ (resp. $p(N)$) be the number of summands in the determinant (resp. permanent) of an $N\times N$ circulant matrix $A=(a_{ij})$ given by $a_{ij}=X_{i+j}$ where $i+j$ should be considered mod $N$. This short note is  devoted to prove that $d(N)=p(N)$ if and only if $N$ is a prime power. We then give an application to homogeneous monomial ideals failing the Weak Lefschetz property.
\end{abstract}

\thanks{Acknowledgments:   The second author is  member of INdAM - GNSAGA and is supported by PRIN
``Geometry of algebraic varieties''.   The third  author was partially   supported
by  MTM2016--78623-P.
\\ {\it Key words and phrases. Circulant matrix, permanent, weak Lefschetz
property, Laplace equations, monomial ideals,
toric varieties.}
\\ {\it 2010 Mathematic Subject Classification. 15B05, 15A15, 13E10.}}

\maketitle

%\tableofcontents

\markboth{L. Colarte, E. Mezzetti, R. M. Mir\'o-Roig, M. Salat}{Permanent and determinant of a circulant matrix}

%\today

%*****************************************************************************
\large

\section{Introduction}

We say that a square $N\times N$ matrix $A$ is a {\em circulant} matrix if each row is obtained applying the cycle
$(\alpha _N,\alpha _0,\ldots ,\alpha_{N-1})$ to the preceding row. Circulant matrices have been
studied for a long time and from many different perspectives, for instance in \cite{BN}, \cite{KS},
\cite{LWW}, \cite{Malenf},\cite{T}, \cite{W} and \cite{V}.
In this note, we will focus our attention on the generic circulant matrix $Circ(x_0,x_1,\ldots ,x_{N-1})$ given by the first row
$(x_0 \ x_1\  \ldots  \ x_{N-1})$ (where $x_{0},\ldots, x_{N-1}$ are indeterminates)
and we will address the problem of determining the number of
terms of its determinant $\det(Circ(x_0,\ldots, x_{N-1}))$ (resp. permanent $\per(Circ(x_0,\ldots, x_{N-1}))$). More precisely,
$\det(Circ(x_0,\ldots, x_{N-1}))$ and  $\per(Circ(x_0, \ldots, x_{N-1}))$ are homogeneous polynomials of degree $N$ in
$k[x_0,x_1,\ldots,x_{N-1}]$ and we would like to determine the  number $d(N)$ (resp. $p(N)$) of monomials
appearing in the expansion of $\det(Circ(x_0, \ldots,x_{N-1}))$ (resp.   $\per(Circ(x_0, \ldots,x_{N-1}))$).

In \cite{BN}, it was shown  that the monomials $x_0^{a_0}x_1^{a_1}\cdots x_{N-1}^{a_{N-1}}$ appearing in $\per(Circ(x_0, \ldots,x_{N-1}))$
 are precisely those given by the solutions of the system
 $$
\begin{array}{c}
a _0 + 2a _1 + \cdots  + Na _{N-1} \equiv 0 \pmod{N} \\
 a _0 + \cdots + a _{N-1} = N.
 \end{array}
$$
Unfortunately an analogous result for  the monomials $x_0^{a_0}x_1^{a_1}\cdots x_{N-1}^{a_{N-1}}$ appearing in the expansion
of  $\det(Circ(x_0, \ldots,x_{N-1}))$ is not known. We easily check that $d(N)\le p(N)$.
In addition, in \cite{T}, Thomas proved that $d(N)=p(N)$ for $N$ a power of a prime and asked whether the converse is true. This short note is devoted to prove this result $d(N)=p(N)$ if and only if  $N$ is a
power of a prime.

  In the second part of this note, we generalize the results in \cite{MeMR2} and we prove that the artinian ideal $I^N_{0,1,\ldots ,N-1}\subset k[x_0,x_1,\ldots ,x_{N-1}]$ generated by the monomials appearing in the
  expansion of  $\per(Circ(x_0,\ldots,x_{N-1}))$ is a {\em Togliatti system}, i.e. it fails the {\em Weak Lefschetz property} in degree $N-1$ (see Definition 4.1); even more, it is a {\em GT-system} (see section 4) and we
  analyze whether it is minimal.  In \cite{MeMR2}, the minimality problem of GT-systems in $k[x,y,z]$ was related to the vanishing of the coefficients in the determinant of certain circulant matrices. As an application
  of our results about the number of monomials appearing in   the expansion of the determinant and the permanent of a circulant matrix we are able to conclude that $I^N_{0,1,\ldots ,N-1}$ is a {\em minimal} Togliatti
  system if and only if $N$ is a power of a prime (see Theorem \ref{thm_main2}).

\vskip 4mm
Next we outline the structure of this note.  In
Section~\ref{defs and  prelim results}, we collect the definition and basic results on the determinant (resp. permanent) of a circulant matrix and we collect examples to illustrate that the number of  terms $d(N)$  in the determinant of an $N\times N$  circulant matrix $A$ could be strictly less than the number of  terms  in the permanent  of  $A$. Section \ref{permanent}  contains the main result of this paper, namely, $d(N)=p(N)$ if and only if $N$ is a power of a prime (see Theorem \ref{mainthm}). In the last section, we apply this result to study whether a GT-system is minimal. We prove that,  given an integer $N\ge 3$, the GT-system $I^N_{0,1,\ldots ,N-1}\subset K[x_0,x_1,\ldots  ,x_{N-1}]$  is minimal if and only if $N$ is power of a prime integer (see Theorem \ref{thm_main2}) and we finish our paper with a Conjecture based on our previous results and on
many examples computed with Macaulay2.

%%%%%%%%%%%%%%%%%%%%%%%%%%%%%%%%%%%%%%%%%%%%%%%%%%
\section{Preliminaries}
\label{defs and prelim results}

Throughout this paper, for any $N\times N$ matrix $A=(a_{i,j})$, we denote by $\det(A)$ its determinant and by $\per(A)$ its permanent defined, as usual, by
$$\begin{array}{rcl}\det(A) & = & \sum _{\sigma \in \Sigma _N}(-1)^{\epsilon (\sigma)}\prod a_{i\sigma(i)} \ \ \text{ and} \\ \\
\per(A) & = & \sum _{\sigma \in \Sigma _N}\prod a_{i\sigma(i)}
\end{array}
$$
where the sum  extends over all elements $\sigma $ of the symmetric group $\sum_N$ and $\epsilon (\sigma)$ denotes the signature of the permutation $\sigma $. In this paper, we are interested in computing the non-zero terms of the determinant and of the permanent of a circulant matrix. So, let us start this section by recalling the definition and the basic properties on circulant matrices needed in the sequel. The reader should read \cite{T}, \cite{W} and \cite{Malenf}  for more details.

\begin{defn} \rm An $N\times N$ {\em circulant matrix} is a matrix of the form
$$ Circ(v_0,...,v_{N-1}):=\begin{pmatrix} v_0 & v_1 & \cdots & v_{N-2} & v_{N-1} \\  v_{N-1} & v_0 & \cdots & v_{N-3} & v_{N-2} \\ \vdots & \vdots & \cdots & \vdots & \vdots \\
v_1 & v_ 2 & \cdots & v_{N-1} & v_0
\end{pmatrix}$$ where successive rows are circular permutations of the first row $v_0,\ldots,v_{N-1}$.
\end{defn}

A circulant matrix $ Circ(v_0,\ldots,v_{N-1})$ is a particular form of a Toeplitz
matrix, i.e., a matrix whose elements are constant along the diagonals. A circulant matrix $ Circ(v_0,\ldots,v_{N-1})$ has $N$
eigenvalues, namely, $v_0 + e^pv_1 + e^{2p}v_2 + \cdots + e^{p(N-1)}v_{N-1}$, $0\le p \le N-1$, where $e$ is a primitive $N$-th root of unity. Therefore, it holds:
\begin{equation} \label{det_cir} \begin{array}{rcl} \det(Circ(v_0,\ldots,v_{N-1})) & = &  \left | \begin{matrix} v_0 & v_1 & \cdots & v_{N-2} & v_{N-1} \\  v_{N-1} & v_0 & \cdots & v_{N-3} & v_{N-2} \\ \vdots & \vdots & \cdots & \vdots & \vdots \\
v_1 & v_ 2 & \cdots & v_{N-1}& v_0  \end{matrix} \right | \\ \\
& = & \prod _{j=0}^{N-1} (v_0 + e^jv_1 + e^{2j}v_2 + \cdots + e^{j(N-1)}v_{N-1}). \end{array} \end{equation}
 The computation of the permanent is harder and, in spite of several
 combinatorial interpretations, very little is known so far. The computation of the permanent of a matrix is a challenging problem; it is computationally very hard, even for (0, 1)-matrices. In fact, Valiant
proved that computing the permanent of a (0, 1)-matrix is $\# P$-complete (see \cite{V}). In general, we have:
\begin{equation} \label{per_cir} \begin{array}{rcl} \per(Circ(v_0,\ldots,v_{N-1})) & = & \sum _{\sigma \in \Sigma _N} v_{\sigma(0)} v_{\sigma(1)}\cdots v_{\sigma(N-1)}
\end{array}.
\end{equation}
The product on the right hand side in the equations (\ref{det_cir}) and (\ref{per_cir}), when expanded out, contains ${2N-1}\choose N$ terms and it is still an open problem to find an efficient  formula for the coefficients and decide whether they are zero or not. Some examples of these
determinants (resp. permanents) of generic circulant matrices for small values of $N$ are:

{\small
 $$\begin{array}{rcl} \det (Circ(x,y,z)) &  = & x^3+y^3+z^3-3xyz \\
 \per (Circ(x,y,z)) & = &  x^3+y^3+z^3+3xyz;
  \\  \\
  \det (Circ(x,y,z,t)) & = & x^4-y^4+z^4-t^4-2x^2z^2+2y^2t^2-4x^2yt+4xy^2z-4yz^2t+4xzt^2; \\
 \per (Circ(x,y,z,t)) & = &  x^4+y^4+z^4+t^4+2x^2z^2+2y^2t^2+4x^2yt+4xy^2z+4yz^2t+4xzt^2;
\\  \\
  \det (Circ(x,y,z,t,u)) &  = & x^5+y^5+z^5+t^5+u^5-5x^3yu-5x^3zt-5xy^3z-5y^3tu -5xz^3u \\
  & & -5yz^3t-5xyt^3-5zy^3u-5xtu^3-5yzu^3+5x^2y^2t+5x^2yz^2 +5x^2zu^2 \\
  & &+5x^2t^2u+5xy^2u^2+5xz^2t^2+5y^2z^2u +5y^2tu^2+5yt^2u^2+5z^2tu^2-5xyztu; \\
 \per (Circ(x,y,z,t,u)) & = & x^5+y^5+z^5+t^5+u^5+ 5 t u^3 x + 5 t^2 u x^2 + 5 t^2 u^2 y + 5 t^3 x y + 5 u x^3 y \\
 & & + 5 u^2 x y^2 + 5 t x^2 y^2 + 5 t u y^3  + 5 t^3 u z + 5 u^2 x^2 z + 5 t x^3 z + 5 u^3 y z + 15 t u x y z \\
& &  + 5 t^2 y^2 z + 5 x y^3 z + 5 t u^2 z^2 + 5 t^2 x z^2 + 5 x^2 y z^2 + 5 u y^2 z^2 + 5 u x z^3 + 5 t y z^3;
  \end{array}$$
}
  {\small
 $$\begin{array}{rcl}
% \det (Circ(x,y,z,t,u,v)) &  = & x^6-y^6+z^6-t^6+u^6-v^6 - 6 t u^4 v + 9 t^2 u^2 v^2 - 2 t^3 v^3 +  6 t^2 u^3 x \\
% & &
% - 12 t^3 u v x + 6 u v^4 x + 3 t^4 x^2 - 9 u^2 v^2 x^2 -6 t v^3 x^2 + 2 u^3 x^3 + 12 t u v x^3 \\
% & &
% - 3 t^2 x^4  - 6 t^3 u^2 y + 6 t^4 v y - 6 u^2 v^3 y + 6 t v^4 y + 12 u^3 v x y - 18 t u^2 x^2 y - 6 v x^4 y \\
% & &
% - 3 u^4 y^2 - 9 t^2 v^2 y^2 + 18 t^2 u x y^2 + 9 v^2 x^2 y^2 + 6 u x^3 y^2 - 2 t^3 y^3 -  2 v^3 y^3 \\
% & &
% - 12 u v x y^3 - 6 t x^2 y^3 + 3 u^2 y^4 + 6 t v y^4 + 6 t^4 u z + 6 u^3 v^2 z - 12 t u v^3 z - 6 u^4 x z \\
% & &
% +18 t^2 v^2 x z + 6 v^2 x^3 z - 6 u x^4 z + 12 t u^3 y z - 12 t^3 x y z - 12 v^3 x y z + 12 t x^3 y z \\
% & &
% +18 u v^2 y^2 z - 12 t u y^3 z + 6 x y^4 z - 9 t^2 u^2 z^2 - 6 t^3 v z^2 + 3 v^4 z^2 + 9 u^2 x^2 z^2  \\
% & &
%- 18 t v x^2 z^2 - 18 u^2 v y z^2 + 9 t^2 y^2 z^2 - 9 x^2 y^2 z^2 - 6 v y^3 z^2 + 2 u^3 z^3 + 12 t u v z^3 \\
% & & + 6 t^2 x z^3 + 2 x^3 z^3 + 12 v x y z^3 + 6 u y^2 z^3 - 3 v^2 z^4 - 6 u x z^4 - 6 t y z^4; \\

 \det(Circ(x,y,z,t,u,v)) & = & x^6-y^6+z^6-t^6+u^6-v^6 +  6 t^4 u z + 6 t^4 v y + 3 t^4 x^2 - 6 t^3 u^2 y - 12 t^3 u v x - \\
 & &
 2 t^3 v^3 - 6 t^3 v z^2 - 12 t^3 x y z - 2 t^3 y^3 +  6 t^2 u^3 x + 9 t^2 u^2 v^2 - 9 t^2 u^2 z^2 + \\
 % 24 t^2 u v y z + 12 t^2 u x^2 z + 12 t^2 v x^2 y + 24 t u^2 v x z +  12 t u^2 v y^2 + 24 t u v^2 x y +
 % 24 t u x y z^2 + 12 t v^2 y z^2 + 24 t v x y^2 z + 12 u^2 x y^2 z + 12 u v^2 x z^2 + 24 u v x^2 y z + \\
 & &
 18 t^2 u x y^2 + 18 t^2 v^2 x z - 9 t^2 v^2 y^2 - 3 t^2 x^4 + 6 t^2 x z^3 + \\
 & &
 9 t^2 y^2 z^2 - 6 t u^4 v + 12 t u^3 y z - 18 t u^2 x^2 y - 12 t u v^3 z + \\
 & &
 12 t u v x^3 +  12 t u v z^3 - 12 t u y^3 z + 6 t v^4 y - 6 t v^3 x^2 - \\
 & &
 18 t v x^2 z^2 + 6 t v y^4 - 6 t x^2 y^3 + 12 t x^3 y z - 6 t y z^4 - \\
 & &
 6 u^4 x z - 3 u^4 y^2 +  6 u^3 v^2 z + 12 u^3 v x y + 2 u^3 x^3 + 2 u^3 z^3 - 6 u^2 v^3 y - 9 u^2 v^2 x^2 - \\
 & &
 18 u^2 v y z^2 + 9 u^2 x^2 z^2 + 3 u^2 y^4 +  6 u v^4 x + 18 u v^2 y^2 z - \\
 & &
 12 u v x y^3 - 6 u x^4 z + 6 u x^3 y^2 - 6 u x z^4 + 6 u y^2 z^3 +  3 v^4 z^2 - \\
 & &
 12 v^3 x y z - 2 v^3 y^3 + 6 v^2 x^3 z + 9 v^2 x^2 y^2 - 3 v^2 z^4 - 6 v x^4 y + 12 v x y z^3 - 6 v y^3 z^2 + \\
 & &
 2 x^3 z^3 - 9 x^2 y^2 z^2 + 6 x y^4 z\\

 \per(Circ(x,y,z,t,u,v)) & = & x^6+y^6+z^6+t^6+u^6+v^6 +  6 t^4 u z + 6 t^4 v y + 3 t^4 x^2 +  6 t^3 u^2 y + 12 t^3 u v x + \\
 & &
 2 t^3 v^3 + 6 t^3 v z^2 + 12 t^3 x y z + 2 t^3 y^3 +  6 t^2 u^3 x + 9 t^2 u^2 v^2 + 9 t^2 u^2 z^2 + 24 t^2 u v y z + \\
 & &
 12 t^2 u x^2 z + 18 t^2 u x y^2 + 18 t^2 v^2 x z + 9 t^2 v^2 y^2 +  12 t^2 v x^2 y + 3 t^2 x^4 + 6 t^2 x z^3 + \\
 & &
 9 t^2 y^2 z^2 +  6 t u^4 v + 12 t u^3 y z + 24 t u^2 v x z + 12 t u^2 v y^2 + 18 t u^2 x^2 y +  12 t u v^3 z + \\
 & &
 24 t u v^2 x y + 12 t u v x^3 +  12 t u v z^3 + 24 t u x y z^2 + 12 t u y^3 z+  6 t v^4 y + 6 t v^3 x^2 + \\
 & &
 12 t v^2 y z^2 + 18 t v x^2 z^2 + 24 t v x y^2 z + 6 t v y^4 + 6 t x^2 y^3 + 12 t x^3 y z + 6 t y z^4 + \\
 & &
 6 u^4 x z + 3 u^4 y^2 +  6 u^3 v^2 z + 12 u^3 v x y + 2 u^3 x^3 + 2 u^3 z^3 +  6 u^2 v^3 y + 9 u^2 v^2 x^2 + \\
 & &
 18 u^2 v y z^2 + 9 u^2 x^2 z^2 + 12 u^2 x y^2 z + 3 u^2 y^4 +  6 u v^4 x + 12 u v^2 x z^2 + 18 u v^2 y^2 z + \\
 & &
 24 u v x^2 y z + 12 u v x y^3 +  6 u x^4 z + 6 u x^3 y^2 + 6 u x z^4 + 6 u y^2 z^3 +  3 v^4 z^2 + \\
 & &
 12 v^3 x y z + 2 v^3 y^3 +  6 v^2 x^3 z + 9 v^2 x^2 y^2 + 3 v^2 z^4 +  6 v x^4 y + 12 v x y z^3 + 6 v y^3 z^2 + \\
 & &
 2 x^3 z^3 + 9 x^2 y^2 z^2 + 6 x y^4 z

 %\per (Circ(x,y,z,t,u,v)) & = &  x^6+y^6+z^6+t^6+u^6+v^6 + 6 t u^4 v + 9 t^2 u^2 v^2 + 2 t^3 v^3+ 6 u y^2 z^3 + 3 v^2 z^4\\
 %& & + 6 t^2 u^3 x + 12 t^3 u v x + 6 u v^4 x + 3 t^4 x^2 + 9 u^2 v^2 x^2 + 6 t v^3 x^2 + 2 u^3 x^3+ 2 u^3 z^3\\
 %& &  + 12 t u v x^3 + 3 t^2 x^4 + 6 t^3 u^2 y + 6 t^4 v y + 6 u^2 v^3 y + 6 t v^4 y+ 2 x^3 z^3 + 12 v x y z^3\\
 %& & + 12 u^3 v x y + 24 t u v^2 x y + 18 t u^2 x^2 y + 12 t^2 v x^2 y + 6 v x^4 y + 3 u^4 y^2 + 12 t u v z^3 \\
 %& & + 12 t u^2 v y^2 + 9 t^2 v^2 y^2 + 18 t^2 u x y^2 +  9 v^2 x^2 y^2 + 6 u x^3 y^2 + 2 t^3 y^3+ 6 t^2 x z^3  \\
 %& & + 2 v^3 y^3 + 12 u v x y^3 + 6 t x^2 y^3 + 3 u^2 y^4 + 6 t v y^4 + 6 t^4 u z + 6 u^3 v^2 z+ 18 u^2 v y z^2\\
 %& & + 12 t u v^3 z + 6 u^4 x z + 24 t u^2 v x z + 18 t^2 v^2 x z + 12 t^2 u x^2 z + 6 v^2 x^3 z+ 6 v y^3 z^2\\
 %& & + 6 u x^4 z + 12 t u^3 y z + 24 t^2 u v y z + 12 t^3 x y z + 12 v^3 x y z + 24 u v x^2 y z + 9 x^2 y^2 z^2\\
 %& & + 12 t x^3 y z + 18 u v^2 y^2 z + 12 u^2 x y^2 z + 24 t v x y^2 z + 12 t u y^3 z + 6 x y^4 z+ 9 t^2 y^2 z^2  \\
 %& & + 9 t^2 u^2 z^2 + 6 t^3 v z^2 + 3 v^4 z^2 + 12 u v^2 x z^2 + 9 u^2 x^2 z^2 + 18 t v x^2 z^2 + 6 u x z^4 + 6 t y z^4 \\
 %& &  + 12 t v^2 y z^2 + 24 t u x y z^2;
  \end{array}$$
}

As we have seen in these examples when  we expand  $\det(Circ(x_0,\ldots,x_{N-1}))$ we obtain a polynomial in the $x_i$ and
we  define $d(N)$ to be the
number of degree $N$ monomials in this polynomial after like terms have been combined.
So, $d(3)=4$, $d(4)=10$, $d(5)=26 $, $d(6)=68$,  etc. Similarly,
we define $p(N)$ to be the number of terms in  the permanent,
$\per(Circ(x_0,\ldots,x_{N-1}))$, of $Circ(x_0,\ldots,x_{N-1})$. So, $p(3)=4$, $p(4)=10$, $p(5)=26 $, $p(6)=80$,  etc.

%%%%%%%%%%%%%%%%%%%%%%%%%%%%%%%%%

\section{The permanent and the determinant of a circulant matrix }
\label{permanent}
%%%%%%%%%%%%%%%%%%%%%%%%%%%%%%%%%%

In this section we consider the determinant and the permanent of generic matrices, i.e. the entries are indeterminates.

\begin{prob} \label{pblm} To determine the integers $N\ge 1$ such that $d(N)=p(N).$
\end{prob}

From the definition, it is clear that $$d(N) \le p(N)$$ since every term which appears
in  $\det(Circ(x_0,\ldots,x_{N-1}))$ also appears in $\per(Circ(x_0,\ldots,x_{N-1}))$. However, due to cancellations,
some terms appearing in $\per(Circ(x_0,\ldots,x_{N-1}))$
 could be absent in $\det(Circ(x_0,\ldots,x_{N-1}))$, i.e it could be $d(N)
\lneq
 p(N)$ (for example, $d(6)=68<80=p(6)$). To analyze whether $d(N)=p(N)$
  we would like to have an efficient  formula for the coefficients and decide whether they are zero or not.
Let us start determining the non-zero coefficients of $\per(Circ(x_0,\ldots,x_{N-1}))$ and $p(N)$. The function $p(N)$ was
studied in \cite{BN} by Brualdi and Newman, they showed  that $p(N)$ coincides with the number of
solutions to
\begin{equation}\label{sol_p(n)}
\begin{array}{c}
\alpha _0 + 2\alpha _1 + \cdots  + N\alpha _{N-1} \equiv 0 \pmod{N} \\
 \alpha _0 + \cdots + \alpha _{N-1} = N
 \end{array}
\end{equation}
in non-negative integers. They also proved by a generating
function argument that
\begin{equation}\label{p(N)} p(N) =\frac{1}{N}\sum _{k|N }\phi(\frac{N}{k}){2k-1\choose k}
\end{equation}
where $\phi (n)$ is  the Euler's function that counts the
positive integers up to $n$ that are relatively prime to $d$.
Unfortunately, a formula for the coefficients in the expansion of determinant, $\det(Circ(x_0,\ldots,x_{N-1}))$, is not known; a criterium to decide whether a coefficient is non-zero is not available
 and, hence, the value of $d(N)$ is out of reach
despite the fact that its definition seems
at least as natural.
Let us now summarize what is known about the coefficients in the left hand side of the equation (\ref{det_cir}).  To this end we express the determinant of an $N\times N$ circulant matrix as follows:

$$ \det(Circ(x_0,\ldots,x_{N-1}))  = \sum _{0\le a_0\le \cdots \le a_{N-1}\le N-1}c_{a_0\cdots a_{N-1}}x_{a_0}\cdots x_{a_{N-1}}.$$

This sum can also be written as
$$ \det(Circ(x_0,\ldots,x_{N-1}))  = \sum _{0\le M_0,\ldots ,M _{N-1}\le N-1}d_{M_0\cdots M_{N-1}}x_{0}^{M_0}\cdots x_{{N-1}}^{M_{n-1}}$$
where $d_{M_0\cdots M_{N-1}}=c_{a_0\cdots a_{N-1}}$, if $M_0+\cdots +M_{N-1}=N$ and $M_i$ is the multiplicity of $i$,
the number of times the integer $i$ occurs in the index set $[a_0,\ldots ,a_{N-1}]$.

\begin{prop}\label{coefI} With the above notation, if $\alpha_{0}+2\alpha_{1}+\dotsb+N\alpha_{N-1}\equiv 0\pmod{N}$, then:
\begin{itemize}
\item[(1)] If $a_0+a_1+\cdots +a_{N-1}\not\equiv 0 \pmod{N}$, then $c_{a_0\cdots a_{N-1}}=0 $.
\item[(2)] If $N$ is prime and $a_0+a_1+\cdots +a_{N-1}\equiv 0 \pmod{N}$, then $c_{a_0\cdots a_{N-1}}\ne 0 $.
\end{itemize}
\end{prop}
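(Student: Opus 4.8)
The plan is to work throughout from the eigenvalue factorization (\ref{det_cir}), writing $\det(Circ(x_0,\dots,x_{N-1}))=\prod_{j=0}^{N-1}L_j$ with $L_j=\sum_{k=0}^{N-1}e^{jk}x_k$, and to read off the coefficient $c_{a_0\cdots a_{N-1}}$ of $x_0^{\alpha_0}\cdots x_{N-1}^{\alpha_{N-1}}$ directly from this product. Expanding, that coefficient is $c_{a_0\cdots a_{N-1}}=\sum_f e^{W(f)}$, where $f$ runs over the ways of choosing, from each factor $L_j$, a variable $x_{f(j)}$ so that every index $i$ is chosen exactly $\alpha_i$ times, and $W(f):=\sum_{j=0}^{N-1}jf(j)$. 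For part (1) I would use the ring automorphism $\phi\colon x_k\mapsto e^{k}x_k$. Since $\phi(L_j)=\sum_k e^{(j+1)k}x_k=L_{j+1}$ (indices mod $N$), $\phi$ merely permutes the factors and hence fixes $\det(Circ)$; but $\phi$ multiplies $x_0^{\alpha_0}\cdots x_{N-1}^{\alpha_{N-1}}$ by $e^{\sum_i i\alpha_i}=e^{a_0+\cdots+a_{N-1}}$. If $a_0+\cdots+a_{N-1}\not\equiv0\pmod N$ this factor is $\ne1$, so invariance forces $c_{a_0\cdots a_{N-1}}=0$. (Equivalently, no permutation has a displacement multiset summing to a nonzero residue mod $N$, so such a monomial is already absent from the permanent.)

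For part (2) set $N=p$ prime and assume $a_0+\cdots+a_{p-1}\equiv0\pmod p$. The extra facts I would exploit are that $c:=c_{a_0\cdots a_{N-1}}$ is a rational integer (being a coefficient of $\det$) while at the same time $c=\sum_{r=0}^{p-1}N_re^{r}$, where $N_r:=\#\{f:W(f)\equiv r\pmod p\}$ and $\sum_{r}N_r=\binom{p}{\alpha_0,\dots,\alpha_{p-1}}$ is the multinomial coefficient. When the monomial is a pure power $x_i^p$ there is a single admissible $f$ and $c=\pm1\ne0$, so I may assume at least two of the $\alpha_i$ are positive and all $\alpha_i<p$.

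The heart of the matter, and the step I expect to be the main obstacle, is to rule out complete cancellation among the $N_r$; I would control it by a free group action combined with a $p$-adic count. First, the cyclic shift $\tau$ of the $p$ positions, $(\tau f)(j)=f(j-1)$, satisfies $W(\tau f)=W(f)+\sum_j f(j)=W(f)+\sum_i i\alpha_i\equiv W(f)\pmod p$ exactly because $a_0+\cdots+a_{p-1}\equiv0$; and $\tau$ acts freely on our non-constant arrangements, since a shift by any nonzero amount is a single $p$-cycle, so a fixed $f$ would be constant, i.e.\ a pure power. Hence every $\tau$-orbit has size $p$ and lies in one residue class, giving $p\mid N_r$ for all $r$; write $N_r=pn_r$. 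Second, by Kummer's theorem $v_p\binom{p}{\alpha_0,\dots,\alpha_{p-1}}=1$, because the $\alpha_i<p$ add up to $p$ with exactly one carry in base $p$, so $\sum_r n_r=\frac{1}{p}\binom{p}{\alpha_0,\dots,\alpha_{p-1}}$ is not divisible by $p$. Finally $c=p\sum_{r}n_re^{r}$, and since $1,e,\dots,e^{p-2}$ are $\ZZ$-linearly independent, the vanishing $\sum_r n_re^{r}=0$ would force all $n_r$ equal and hence $p\mid\sum_r n_r$, a contradiction. Therefore $c\ne0$ (in fact $v_p(c)=1$). Primality enters essentially in two places here — in ``exactly one carry'' and in the cyclotomic independence — which is precisely why the conclusion is special to prime $N$.
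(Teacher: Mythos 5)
Your proof is correct, but it cannot be compared step-by-step with the paper's, because the paper gives no argument at all for Proposition \ref{coefI}: both parts are dispatched by citation, (1) to \cite{Malenf}, Theorem 1 (or \cite{W}, Proposition 10.4.3), and (2) to \cite{Malenf}, Corollary 4 (or \cite{W}, Chapter 11). What you have written is a self-contained substitute, and it holds up. For (1), the substitution $x_k\mapsto e^kx_k$ cyclically permutes the linear factors of the factorization (\ref{det_cir}), hence fixes the determinant while rescaling each monomial by $e^{a_0+\cdots+a_{N-1}}$; your parenthetical remark (every permutation term of the permanent has displacement sum $\equiv 0\pmod N$) gives the same conclusion even more cheaply. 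For (2), the chain of claims is complete: the shift $\tau$ preserves the weight residue of a selection function exactly because $a_0+\cdots+a_{p-1}\equiv 0\pmod p$ (this is where the hypothesis enters); it acts freely once the pure powers $x_i^p$ (coefficient $\pm 1$) are set aside, since a nonzero shift of $\ZZ/p\ZZ$ is a single $p$-cycle, so $p\mid N_r$ for every residue $r$; the multinomial coefficient has $p$-adic valuation exactly $1$ because all multiplicities are $<p$, so $\sum_r n_r\not\equiv 0\pmod p$; and $\sum_r n_re^r=0$ would force all $n_r$ equal, because the only $\ZZ$-linear relation among $1,e,\dots,e^{p-1}$ is $1+e+\cdots+e^{p-1}=0$. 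Compared with the paper, your route makes the result self-contained and yields the sharper quantitative fact $v_p(c_{a_0\cdots a_{p-1}})=1$, which the bare citation does not surface; the paper's route buys only brevity. (You were also right to ignore the hypothesis $\alpha_0+2\alpha_1+\cdots+N\alpha_{N-1}\equiv 0\pmod N$ in the statement: for a degree-$N$ monomial it reduces to the condition on $a_0+\cdots+a_{N-1}$ already present in (1) and (2), and appears to be a typographical artifact.)
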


\begin{proof} (1) See \cite{Malenf}; Theorem 1 or \cite{W}; Proposition 10.4.3.

(2) See \cite{Malenf}; Corollary 4 or \cite{W}; Chapter 11.

\end{proof}

\begin{rem}\label{rem_circ} \rm  (1) It is worthwhile to point out that expressed in terms of the set of multiplicities, $[M] = [M_0,M_1, \ldots ,M_{N-1}]$,
$0 \le  M_0,M_1, \ldots  ,M_{N-1} \le  N$, the condition $a_0+a_1+\cdots +a_{N-1}\equiv 0 \pmod{N}$ becomes
$0 M_0 + 1 M_1 +\cdots  + (N - 1) M_{N-1} \equiv 0 \pmod{N}$ subject to the restriction
$M_0 +M_1 +\cdots  +M_{N-1} = N$.

 (2)  Proposition \ref{coefI} (2) is not true if $N$ is not prime. Indeed,
 for $N=6$ we have seen that $c_{0,0,1,3,3,5}$, $c_{0,0,1,2,4,5}$,
 $c_{0,0,2,3,3,4}$, $c_{0,1,1,2,4,4}$, $c_{0,1,1,2,3,5}$, $c_{0,1,2,2,3,4}$,
 $c_{0,1,3,4,4,5}$, $c_{0,2,3,4,4,5}$, $c_{0,2,2,4,5,5}$, $c_{1,2,2,3,5,5}$,
 $c_{1,1,3,4,4,5}$
 $c_{1,2,3,3,4,5}$ are zero in the determinant expansion, but
  they satisfy the previous conditions and  they appear as non-zero coefficients
 in the permanent expansion. Also, for $N=10$ we have $c_{0,0,0,0,1,1,1,3,6,8}=0$ (see, for instance, \cite{W}; pag. 123). More generally, we have
\end{rem}

\begin{prop} \label{key} For $N=M_0+M_1+3$ with $M_0,M_1\ge 1$, the coefficient $c_{0\cdots 01\cdots 1a_{N-3}a_{N-2}a_{N-1}}$ with $M_1+a_{N-3}+a_{N-2}+a_{N-1}\equiv 0 \pmod{N}$ is zero if $N$ divides $(M_1+2)(M_1+1)$ and either
\begin{itemize}
\item $a_{N-3}\le a_{N-2}<N-M_1$, $a_{N-3}+a_{N-2}=N+1-\frac{(M_1+2)(M_1+1)}{N}$ and $a_{N-1}=M_0+2+
\frac{(M_1+2)(M_1+1)}{N}$, or
\item $N-M_1\le a_{N-2}\le a_{N-3}$, $a_{N-2}+a_{N-1}=N+1+\frac{(M_0+2)(M_0+1)}{N}$ and $a_{N-3}=M_0+2-
\frac{(M_0+2)(M_0+1)}{N}$.
\end{itemize}
\end{prop}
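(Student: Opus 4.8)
The plan is to read the coefficient off the eigenvalue factorization (\ref{det_cir}) and show that it collapses to zero. Write $Circ(x_0,\dots,x_{N-1})=\sum_{k=0}^{N-1}x_kP^k$, where $P$ is the cyclic shift, so that by (\ref{det_cir}) one has $\det Circ=\prod_{j=0}^{N-1}L_j$ with $L_j=\sum_{k=0}^{N-1}\zeta^{jk}x_k$ and $\zeta$ a fixed primitive $N$-th root of unity. Set $K=(M_1+2)(M_1+1)/N$, an integer by hypothesis. First I would record the numerology forced by the first bullet: from $N=M_0+M_1+3\ge M_1+4$ one gets $1\le K\le M_1$, and then $a_{N-3}+a_{N-2}=N+1-K$ together with $a_{N-3}\le a_{N-2}<N-M_1$ forces $2\le a_{N-3}\le a_{N-2}<N-M_1$ and $a_{N-1}=M_0+2+K=N-M_1-1+K\in[2,N-1]$; in particular the three extra indices all exceed $1$, so they never collide with $x_0$ or $x_1$. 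To extract the coefficient $c$ of $x_0^{M_0}x_1^{M_1}x_{a_{N-3}}x_{a_{N-2}}x_{a_{N-1}}$ I let three distinct factors $j_1,j_2,j_3$ supply the three extra variables, and the remaining $N-3$ factors supply $x_0,x_1$; the latter contribute the elementary symmetric function $e_{M_1}$ of $\{\zeta^j:j\neq j_1,j_2,j_3\}$.

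The engine of the computation is the identity $\prod_{j=0}^{N-1}(1+\zeta^jt)=1-(-1)^Nt^N$. Dividing by $(1+\zeta^{j_1}t)(1+\zeta^{j_2}t)(1+\zeta^{j_3}t)$, extracting $[t^{M_1}]$, and using $M_1<N$ to discard the $t^N$ term gives $e_{M_1}(\{\zeta^j:j\neq j_1,j_2,j_3\})=(-1)^{M_1}h_{M_1}(\zeta^{j_1},\zeta^{j_2},\zeta^{j_3})=(-1)^{M_1}\sum_{p+q+r=M_1}\zeta^{j_1p+j_2q+j_3r}$. Substituting this, summing over distinct triples $(j_1,j_2,j_3)$ by inclusion--exclusion, and applying orthogonality $\sum_{j=0}^{N-1}\zeta^{jx}=N$ or $0$ according as $N\mid x$ or not (the fully diagonal term always surviving because $a_{N-3}+a_{N-2}+a_{N-1}+M_1\equiv 0\pmod N$), I expect everything to collapse to
\[
c=\frac{(-1)^{M_1}}{\prod_v m_v!}\Big(N^3D-N^2(A+B+C)+N(M_1+2)(M_1+1)\Big),
\]
where $m_v$ are the multiplicities of the extra values (so the prefactor is a nonzero constant), and $A,B,C,D$ count the triples $(p,q,r)$ with $p+q+r=M_1$ and $p,q,r\ge0$ subject respectively to $a_{N-3}+p\equiv0$, $a_{N-2}+q\equiv0$, $a_{N-1}+r\equiv0$, and to all three congruences at once, modulo $N$. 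Only the bracket matters for vanishing.

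Evaluating the counts under the first bullet is then short. Since $1\le a_{N-3},a_{N-2}<N-M_1$ and $0\le p,q\le M_1$, both $a_{N-3}+p$ and $a_{N-2}+q$ lie strictly between $0$ and $N$, so they are never $\equiv0$; hence $A=B=0$, and therefore $D=0$. For the third congruence, $a_{N-1}+r=N-M_1-1+K+r\equiv0\pmod N$ pins down the unique $r=M_1+1-K\in[0,M_1]$, leaving $p+q=K-1$ with exactly $K$ solutions, so $C=K$. The bracket is then $-NK+(M_1+2)(M_1+1)=0$, the cancellation being precisely the statement that $N$ divides $(M_1+2)(M_1+1)$. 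Thus $c=0$.

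For the second bullet I would not repeat the count but use the symmetry $x_k\mapsto x_{(1-k)\bmod N}$: it is the composition of the transposition $x_k\mapsto x_{-k}$ (which leaves $\det$ unchanged) with $x_k\mapsto x_{k+1}$ (which turns $Circ$ into $P^{-1}Circ$, multiplying $\det$ by $(-1)^{N-1}$), so it multiplies every coefficient by $(-1)^{N-1}$. This substitution interchanges the multiplicities $M_0\leftrightarrow M_1$ and sends each extra index $a$ to $N+1-a$; a direct check shows it carries a second-bullet configuration to a first-bullet one with $M_0$ and $M_1$ swapped (so that $(M_0+2)(M_0+1)$ plays the role of $(M_1+2)(M_1+1)$), whence its coefficient vanishes as well. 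The delicate part of the whole argument is the reduction in the second paragraph---setting up the inclusion--exclusion over the coincidence patterns of $(j_1,j_2,j_3)$ correctly and justifying that the $t^N$ term drops out because $M_1<N$; once the closed form is in hand, the evaluation of $A,B,C,D$ and the final cancellation are immediate.
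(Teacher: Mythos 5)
Your argument is not really comparable to ``the paper's proof,'' because the paper offers no proof at all: its proof of Proposition~\ref{key} is the single line ``See \cite{Malenf}; Corollary 6.'' What you have written is therefore a genuinely different, self-contained route, and after checking it I believe it is correct. Your engine is sound: with $\det = \prod_j L_j$, the coefficient of $x_0^{M_0}x_1^{M_1}x_{a_{N-3}}x_{a_{N-2}}x_{a_{N-1}}$ is $\frac{1}{\prod_v m_v!}\sum \zeta^{j_1a_{N-3}+j_2a_{N-2}+j_3a_{N-1}}\,e_{M_1}(\{\zeta^j\}_{j\neq j_1,j_2,j_3})$, the sum over ordered triples of distinct indices; the conversion $e_{M_1}=(-1)^{M_1}h_{M_1}(\zeta^{j_1},\zeta^{j_2},\zeta^{j_3})$ is legitimate because $M_1\le N-4<N$ kills the $t^N$ term in $\prod_j(1+\zeta^jt)=1-(-1)^Nt^N$; and the inclusion--exclusion over coincidence patterns, combined with orthogonality and the standing congruence $M_1+a_{N-3}+a_{N-2}+a_{N-1}\equiv 0 \pmod N$, gives exactly your bracket $N^3D-N^2(A+B+C)+2N\binom{M_1+2}{2}$. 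Under the first bullet the counts are as you say: $0<a_{N-3}+p,\,a_{N-2}+q<N$ forces $A=B=D=0$, while $a_{N-1}+r\equiv 0$ pins $r=M_1+1-K$ and then $p+q=K-1$ gives $C=K$, so the bracket is $-N^2K+N(M_1+1)(M_1+2)=0$. The symmetry $x_k\mapsto x_{1-k}$ (transpose composed with the cyclic shift, scaling every coefficient by $(-1)^{N-1}$) is also a valid device. What your route buys is an elementary and verifiable argument in place of an appeal to an unpublished preprint; what it costs is that it is tailored to the shape $0^{M_0}1^{M_1}abc$, whereas Malenfant's machinery produces general expansion formulas of which this is a corollary.

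One caveat you should make explicit rather than bury in ``a direct check shows'': the second bullet as printed is internally inconsistent, hence vacuous. Indeed $a_{N-3}=M_0+2-\frac{(M_0+1)(M_0+2)}{N}\le M_0+1<M_0+3=N-M_1$, so no triple can satisfy $N-M_1\le a_{N-2}\le a_{N-3}$. The intended condition (the one consistent with the sorted-index convention and with your involution) is $N-M_1\le a_{N-2}\le a_{N-1}$, and for that corrected version your check does succeed: setting $b_i=N+1-a_i$ and swapping $M_0\leftrightarrow M_1$, one gets $b_{N-1}\le b_{N-2}<N-M_0$, $b_{N-1}+b_{N-2}=N+1-L$ and $b_{N-3}=M_1+2+L$ with $L=\frac{(M_0+1)(M_0+2)}{N}$, which is a first-bullet configuration; moreover $N\mid (M_0+1)(M_0+2)$ if and only if $N\mid(M_1+1)(M_1+2)$ because $M_0+1\equiv-(M_1+2)$ and $M_0+2\equiv-(M_1+1)\pmod N$, so the divisibility hypothesis transports correctly. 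A literal direct check on the printed statement would instead have uncovered the typo. Also, your inference ``$N\ge M_1+4$ gives $K\le M_1$'' needs the integrality of $K$ when $M_1=1$ (there $(M_1+1)(M_1+2)/N=6/N<2$, so $K=1$); for $M_1\ge 2$ the inequality is arithmetic. With these two points spelled out, your proof stands.
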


\begin{proof} See \cite{Malenf}; Corollary 6.
\end{proof}

Using the theory of symmetric functions H. Thomas proved in \cite{T} that for any prime integer $p\ge 1$ and for any
integer $n\ge 1$ it holds: $d(p^n)=p(p^n)$ but he left open Problem \ref{pblm}. In section 2
and Remark~\ref{rem_circ} we have seen  examples
proving that equality is not always true (for instance, $d(6)<p(6)$).
 We are now ready to state the main result of this paper and explicitly determine when
 $d(N)=p(N)$. Indeed, we have

\begin{thm} \label{mainthm}
Fix $N\ge 1$, then $d(N)=p(N)$ if and only if $N=p^r$, with $p$ a prime integer.
\end{thm}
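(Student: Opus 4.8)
The plan is to prove the two directions separately, treating the prime-power case as already settled (Thomas, \cite{T}) and concentrating on the converse. For the forward direction $d(N)=p(N)$ when $N=p^r$, I would simply cite \cite{T}; the substance of the theorem is the implication that $d(N)<p(N)$ whenever $N$ is \emph{not} a prime power. So assume $N$ has at least two distinct prime divisors, and exhibit at least one monomial $x_0^{M_0}\cdots x_{N-1}^{M_{N-1}}$ that satisfies the Brualdi--Newman system (\ref{sol_p(n)})—hence appears in $\per(Circ(x_0,\ldots,x_{N-1}))$—but whose coefficient $c_{a_0\cdots a_{N-1}}$ in the determinant vanishes. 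Since $d(N)\le p(N)$ always, producing even a single such cancellation forces $d(N)<p(N)$, which is exactly what we need.

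The engine for producing the vanishing coefficient is Proposition \ref{key}, which gives an explicit family of zero determinant-coefficients indexed by a choice of multiplicities $M_0,M_1\ge 1$ with $N=M_0+M_1+3$, subject to the divisibility condition $N\mid (M_1+2)(M_1+1)$ (or the symmetric condition $N\mid (M_0+2)(M_0+1)$). The key combinatorial step, therefore, is to show that when $N$ is not a prime power one can always choose such an $M_1$ in the admissible range $1\le M_1\le N-4$ making $N\mid (M_1+2)(M_1+1)$, and then check that the resulting exponents $a_{N-3},a_{N-2},a_{N-1}$ produced by the proposition are legitimate, i.e.\ lie in $\{0,1,\ldots,N-1\}$ and respect the ordering constraints, so that the monomial genuinely occurs in the permanent. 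Writing $m=M_1+1$, the divisibility requirement becomes $N\mid m(m+1)$, a product of two consecutive integers; I would solve this via the Chinese Remainder Theorem by factoring $N=\prod_i p_i^{e_i}$ and distributing the prime-power factors of $N$ between the coprime consecutive factors $m$ and $m+1$. Because $N$ has at least two distinct prime factors, there is a nontrivial, proper way to split $\{p_i^{e_i}\}$ into two groups, giving a residue $m \bmod N$ strictly between $1$ and $N-1$ (the split producing $m\equiv 0$ or $m\equiv N-1$ being precisely what the single-prime case is excluded from avoiding), and hence an $m$ yielding a valid $M_1$ in range.

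The main obstacle I anticipate is the bookkeeping that guarantees the chosen $M_1$ simultaneously (i) lies in the allowed range $1\le M_1$ with $M_0=N-3-M_1\ge 1$, and (ii) produces exponents satisfying the inequalities $a_{N-3}\le a_{N-2}<N-M_1$ with $a_{N-3}+a_{N-2}=N+1-\tfrac{(M_1+2)(M_1+1)}{N}$ and $a_{N-1}=M_0+2+\tfrac{(M_1+2)(M_1+1)}{N}$ (or the symmetric alternative), keeping every exponent inside $[0,N-1]$. One must verify that at least one of the two branches of Proposition \ref{key} is feasible, and that the freedom in splitting $a_{N-3}+a_{N-2}$ into an ordered pair within the required bound can always be exercised; here the contrast with the prime-power case is instructive, since when $N=p^r$ a product $m(m+1)$ of coprime consecutive integers is divisible by $N$ only if $p^r\mid m$ or $p^r\mid (m+1)$, forcing the degenerate boundary values and leaving no interior $M_1$, which is the structural reason no such cancellation arises and $d(p^r)=p(p^r)$. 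I would close by assembling these pieces: produce the admissible $M_1$ from the CRT split, feed it into Proposition \ref{key} to obtain a vanishing determinant coefficient whose monomial satisfies (\ref{sol_p(n)}), and conclude $d(N)<p(N)$, completing the converse.
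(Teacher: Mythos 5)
Your strategy coincides with the paper's own proof: cite \cite{T} for prime powers, and for the converse feed a suitable $M_1$ into Proposition \ref{key}, arranging the divisibility $N\mid (M_1+1)(M_1+2)$ through a coprime splitting of $N$. Your CRT step is the paper's Bezout step in different clothing: writing $N=nm$ with $1<n<m$, $\gcd(n,m)=1$, the paper takes $\lambda m=1+\mu n$ with $\lambda,\mu\ge 1$, $\lambda m\le N$, and sets $M_1=\mu n-1$, so that $(M_1+1)(M_1+2)=(\mu n)(\lambda m)=\lambda\mu N$; this is exactly your solution of $x\equiv 0 \pmod{n}$, $x\equiv -1\pmod{m}$. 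That part of your plan is sound, and in fact the range condition $1\le M_1\le N-4$ that you wave at is automatic: $M_1=0$ or $M_1=N-3$ would force $(M_1+1)(M_1+2)\equiv 2\pmod{N}$, contradicting $N\mid (M_1+1)(M_1+2)$ since $N\ge 6$.

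The genuine gap is precisely the step you label as anticipated bookkeeping and never carry out: you must actually exhibit exponents $a_{N-3}\le a_{N-2}<N-M_1$ with $a_{N-3}+a_{N-2}=N+1-\frac{(M_1+1)(M_1+2)}{N}$ and $a_{N-1}=M_0+2+\frac{(M_1+1)(M_1+2)}{N}\le N-1$, and without them Proposition \ref{key} cannot be invoked, so no cancellation has been produced. This is where the paper does its concrete work: it sets $a_{N-2}=N-\mu n$ (the largest admissible value), $a_{N-3}=\mu(n-\lambda)+1$ and $a_{N-1}=N-\mu n+\lambda\mu$, and all required constraints reduce to $\lambda<n$ and $\mu<m$, which follow from $\gcd(n,m)=1$ and $\lambda m\le N$ (e.g.\ $\lambda=n$ would give $n\mid \mu n=N-1$ while $n\mid N$, $n>1$; and $a_{N-3}\le a_{N-2}$ unwinds to $(n-\lambda)(m-\mu)\ge 0$). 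In particular the bound $\frac{(M_1+1)(M_1+2)}{N}=\lambda\mu\le M_1$, needed to keep $a_{N-1}\le N-1$, is not obvious a priori and is the one place your plan could have foundered had the quotient been too large. Once these exponents are written down, membership in the permanent is automatic, since $M_1+(a_{N-3}+a_{N-2})+a_{N-1}=M_1+(N+1-\lambda\mu)+(M_0+2+\lambda\mu)=2N\equiv 0\pmod{N}$, which is condition (\ref{sol_p(n)}). So your outline is completable and matches the paper's route, but as submitted the decisive verification — the existence of admissible $a_{N-3},a_{N-2},a_{N-1}$ — is missing rather than proved.
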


\begin{proof} If $N=p^r$ with $p$ a prime integer and $r\ge 1$ then by \cite{T} we have $d(p^r)=p(p^r)$. Let us prove the converse. Write $N=nm$ with $1<n<m$ and  $gcd(m,n)=1$. In order to prove that $p(N)\lneq d(N)$
it is enough to exhibit an $N$-tuple $(a_0,a_1,\ldots ,a_{N-1})$ verifying the equations (\ref{sol_p(n)}), i.e. $a _0 + 2a _1 +\cdots  + Na _{N-1} \equiv 0 \pmod{N}$, $a _0 + \cdots + a _{N-1} = N$ and such that the coefficient $c_{a_0\cdots a_{N-1}}=0$. To this end, we apply Bezout's theorem and we write $\lambda m = 1 + \mu n$ with $1\le \lambda, \mu $ and $\lambda m \le N$.

We define $$ M_1:=\mu n -1.$$
We first observe that $N=nm$ divides $(M_1 +1)(M_1 +2)$ (Indeed, $(M_1 +1)(M_1 +2)=\mu n (\mu n +1)=(\mu n)(\lambda m)=\lambda \mu N$). Let us check that $M_1\le N-4$. Notice that $M_1=\mu n-1=\lambda m -2\le N-2$. Therefore, we only need to prove that the case $M_1=N-3$ is not possible. If $M_1=N-3$ then $\mu n +1=M_1+2=N-1$, i.e. $\mu n=N-2=nm-2$. So, $n$ divides 2. Since $1<n$, we get $n=2$ and $\mu = m-1$. Using the equalities $\lambda m=1+\mu n=1+2(m-1)=2m-1$ we obtain that $m$ divides 1 which is a contradiction.

Let us now  define $$\begin{array}{rcl}M_0 &: = & nm-\mu n -2, \\
 A_2 & := & nm-\mu n, \\
A_1 & := & \mu n-\mu \lambda+1, \text{ and } \\
A_3 & := & nm-\mu n+\lambda \mu.
\end{array}
$$

We easily check that
\begin{itemize}
\item $1\le M_0=N-M_1-3,$
\item $M_1+A_1+A_2+A_3\equiv 0 \pmod{N}$, and
\item $A_1\le A_2<N-M_1$, $A_1+A_2=N+1-\frac{(M_1+1)(M_1+2)}{N}$ and $A_3=M_0+2+\frac{(M_1+1)(M_1+2)}{N}$.
\end{itemize}
Therefore, we can apply Proposition \ref{key} and conclude that $c_{0\cdots 01\cdots 1A_{1}A_{2}A_{3}}=0$.
\end{proof}

As an immediate consequence of Theorem \ref{mainthm} we can  slightly generalize Proposition \ref{coefI} and we get:

\begin{cor} \label{improve-coefI} With the notation of Proposition \ref{coefI}, it holds:
 If $N$ is a power of a prime and $a_0+a_1+\cdots +a_{N-1}\equiv 0 \pmod{N}$, then $c_{a_0\cdots a_{N-1}}\ne 0 $.
\end{cor}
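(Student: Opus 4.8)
The plan is to deduce the corollary purely from the counting identity in Theorem~\ref{mainthm} together with the inclusion of supports recorded in Proposition~\ref{coefI}(1). The underlying principle is that two nested finite sets of equal cardinality must coincide, so I first want to exhibit the support of the determinant as a subset of the support of the permanent, and then invoke $d(N)=p(N)$ to force them to be equal.

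First I would identify the two supports explicitly. By the result of Brualdi and Newman quoted in (\ref{sol_p(n)}), the degree-$N$ monomials $x_{a_0}\cdots x_{a_{N-1}}$ occurring with nonzero coefficient in $\per(Circ(x_0,\ldots,x_{N-1}))$ are exactly those whose multiplicity vector satisfies $\alpha_0+2\alpha_1+\cdots+N\alpha_{N-1}\equiv 0\pmod{N}$; by Remark~\ref{rem_circ}(1) this is the same as the condition $a_0+a_1+\cdots+a_{N-1}\equiv 0\pmod{N}$ on the indices. Thus $p(N)$ counts precisely the degree-$N$ monomials with $a_0+\cdots+a_{N-1}\equiv 0\pmod{N}$. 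On the other hand, Proposition~\ref{coefI}(1) says that if $a_0+\cdots+a_{N-1}\not\equiv 0\pmod{N}$ then $c_{a_0\cdots a_{N-1}}=0$, so every monomial appearing in $\det(Circ(x_0,\ldots,x_{N-1}))$ already satisfies $a_0+\cdots+a_{N-1}\equiv 0\pmod{N}$. Hence the support of the determinant is contained in the support of the permanent.

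Next I would combine this inclusion with the hypothesis $N=p^r$. By Theorem~\ref{mainthm} we have $d(N)=p(N)$ in this case, that is, the two finite supports have the same number of elements. Since one is contained in the other, they must coincide. In particular every degree-$N$ monomial with $a_0+\cdots+a_{N-1}\equiv 0\pmod{N}$ lies in the support of the determinant, i.e.\ $c_{a_0\cdots a_{N-1}}\neq 0$, which is exactly the assertion.

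There is essentially no obstacle once the two supports are correctly matched up; the only point that requires care is the translation in Remark~\ref{rem_circ}(1) between the permanent's congruence $\sum_i(i+1)\alpha_i\equiv 0$ and the index congruence $\sum_j a_j\equiv 0$, which one should check reduces to the same relation modulo $N$ using $\sum_i\alpha_i=N\equiv 0\pmod{N}$. After that, the corollary is the formal statement that a subset of a finite set having the same cardinality as that set must be the whole set.
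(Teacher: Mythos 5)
Your proof is correct and matches the paper's reasoning: the paper states this corollary as an ``immediate consequence'' of Theorem~\ref{mainthm}, and the content it leaves implicit is exactly what you supply --- the permanent's support is characterized by the congruence (via \cite{BN} and Remark~\ref{rem_circ}(1)), the determinant's support is contained in it by Proposition~\ref{coefI}(1), and $d(N)=p(N)$ for prime powers forces the two finite supports to coincide. Your care with the translation $\sum_i(i+1)\alpha_i\equiv\sum_j a_j\pmod{N}$, using $\sum_i\alpha_i=N$, is exactly the right check.
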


%%%%%%%%%%%%%%%%%%%%%%%%%%%%%%%%%%%%%%%%%%%%%%%%%%%%%%%%%%%%%%%%%%%%%%%%%%%%%%

%%%%%%%%%%%%%%%%%%%%%%%%%%%%%%%%%%%%%%%%%%%%%%%%%%

\section{Galois-Togliatti systems and Galois covers}
\label{GT-systems}

In this section, we will apply the above result to study the minimality of certain Galois-Togliatti systems (GT-systems, for short). So, let us start this section recalling the notion of GT-systems and relating  their minimality with the problem of whether $d(N)=p(N)$. To this end, we fix $k$ an algebraically closed field and we set $R:=k[x_0,\ldots ,x_n] $

\begin{defn}\label{wlp}\rm
Let $I\subset R $ be a homogeneous artinian ideal. We say that   $I$ has Weak Lefschetz Property (WLP)
 if there is a  $L \in [R/I]_1$ such that, for all
integers $j$, the multiplication map
\[
\times L: [R/I]_{j-1} \to [R/I]_j
\]
has maximal rank.
\end{defn}

To establish whether an ideal $I\subset R$ has the WLP is a  difficult and  challenging problem
 and even in simple
cases, such as complete intersections, much remains unknown about the presence of the WLP.
 Recently the failure of the WLP has been connected to a large number of problems,
that appear to be unrelated at first glance.
For example, in \cite{MMO}, Mezzetti, Mir\'{o}-Roig and Ottaviani proved that the failure of the WLP is related to the existence of varieties satisfying at least one Laplace equation of order greater than 2 and they proved:

\begin{thm}\label{tea} Let $I\subset R$ be an artinian
ideal
generated
by $r$ homogeneous polynomials $F_1,\ldots,F_{r}$ of degree $d$ and let $I^{-1}$ be its Macaulay inverse system.
If
$r\le {n+d-1\choose n-1}$, then
  the following conditions are equivalent:
\begin{itemize}
\item[(1)] the ideal $I$ fails the WLP in degree $d-1$;
\item[(2)] the  homogeneous forms $F_1,\ldots,F_{r}$ become
$k$-linearly dependent on a general hyperplane $H$ of $\PP^n$;
\item[(3)] the $n$-dimensional   variety
 $X=\overline{\Image (
\varphi )}$
where
$\varphi :\PP^n \dashrightarrow \PP^{{n+d
\choose d}-r-1}$ is the  rational map associated to $(I^{-1})_d$,
  satisfies at least one Laplace equation of order
$d-1$.
\end{itemize}
\end{thm}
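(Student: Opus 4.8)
The plan is to prove all three conditions equivalent to a single piece of linear algebra, namely that
$$ I_d \cap L\cdot R_{d-1} \neq 0 \quad\text{for a general linear form } L \in R_1, $$
passing between the conditions by means of Macaulay's inverse system (apolarity). Throughout I would identify $S:=k[y_0,\dots,y_n]$ with the dual of $R$, letting $R$ act on $S$ by differentiation ($x_i=\partial/\partial y_i$), so that the apolarity pairing $R_d\times S_d\to k$ is perfect, $(I^{-1})_d=I_d^{\perp}$, and (taking $F_1,\dots,F_r$ to be a basis of $I_d$) $\dim_k(I^{-1})_d=\binom{n+d}{n}-r=:M+1$. The single hypothesis $r\le\binom{n+d-1}{n-1}$ will do double duty: since $\binom{n+d}{n}-\binom{n+d-1}{n}=\binom{n+d-1}{n-1}$, it is equivalent both to $\dim[R/I]_{d-1}\le\dim[R/I]_d$ (so that ``maximal rank'' of $\times L$ in degree $d-1$ means injectivity) and to $M+1\ge\binom{n+d-1}{n}$ (so that the expected dimension of the $(d-1)$-st osculating space of an $n$-fold in $\PP^M$ is the unconstrained value $\binom{n+d-1}{n}-1$ rather than $M$).

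For (1) $\Leftrightarrow$ the common condition: since $I$ is generated in degree $d$ we have $I_{d-1}=0$, so $[R/I]_{d-1}=R_{d-1}$, and the dimension count above shows that for general $L$ the map $\times L\colon[R/I]_{d-1}\to[R/I]_d$ has maximal rank iff it is injective. As $R$ is a domain, $Lg\in I_d$ with $g\neq 0$ forces $Lg\neq 0$; hence failure of injectivity is exactly the existence of $0\neq g\in R_{d-1}$ with $Lg\in I_d$, i.e. $I_d\cap L\,R_{d-1}\neq 0$. For (2) $\Leftrightarrow$ the common condition: $F_1,\dots,F_r$ become linearly dependent on $H=V(L)$ iff some nontrivial combination $\sum c_iF_i$ restricts to $0$ on $H$, i.e. is divisible by $L$; as $\{F_i\}$ spans $I_d$, this says precisely $I_d\cap L\,R_{d-1}\neq 0$.

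For (3) $\Leftrightarrow$ the common condition (the geometric core): I would fix a basis $G_0,\dots,G_M$ of $(I^{-1})_d$, write $\varphi(p)=[G_0(p):\cdots:G_M(p)]$ for $p$ in the source $\PP^n=\Proj S$, and use the duality $\Proj S\cong(\Proj R)^{\vee}$ so that a general point $p$ corresponds to a general hyperplane $H=V(\ell_p)$ of $\PP^n=\Proj R$; this is what matches the ``general point'' of (3) with the ``general hyperplane'' of (2). The projective $(d-1)$-st osculating space at $\varphi(p)$ is $\PP$ of the row space of the jet matrix $M(p)=(\partial^{\beta}g_j(0))_{|\beta|\le d-1,\ 0\le j\le M}$, where $g_j$ is the dehomogenization of $G_j$ at $p$; hence $X$ satisfies a Laplace equation of order $d-1$ iff $\operatorname{rank}M(p)<\binom{n+d-1}{n}$, i.e. iff the row-kernel of $M(p)$ is nonzero. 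I would then identify this kernel with $I_d\cap\ell_pR_{d-1}$: a constant-coefficient operator $D=\sum_{|\beta|\le d-1}\lambda_{\beta}\partial^{\beta}$ lies in the kernel iff $D\big(G(1,t)\big)\big|_{0}=0$ for all $G\in(I^{-1})_d$, and unwinding this through the apolarity pairing sends $\partial^{\beta}\mapsto\ell_p^{\,d-|\beta|}x^{\beta}$, yielding a linear isomorphism from the space of order-$\le d-1$ operators onto $\ell_pR_{d-1}\subset R_d$ under which the kernel becomes $\ell_pR_{d-1}\cap(I^{-1})_d^{\perp}=\ell_pR_{d-1}\cap I_d$. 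This closes the cycle (1) $\Rightarrow$ (2) $\Rightarrow$ (3) $\Rightarrow$ (1).

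The main obstacle is precisely this last osculating-space computation: setting up the projective jets correctly, proving that the row space of $M(p)$ is the affine cone of the osculating space, and verifying the apolarity identification of its kernel with $I_d\cap\ell_pR_{d-1}$. A secondary point demanding care is genericity: I would check that ``$I_d\cap L\,R_{d-1}\neq 0$'' is a closed condition on $L\in\PP(R_1)$, so that the three ``general $L$ / general $H$ / general $p$'' statements refer to one and the same generic locus and the equivalences hold simultaneously.
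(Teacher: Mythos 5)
The paper offers no argument of its own for this statement---its proof is just the citation to \cite[Theorem 3.2]{MMO}---and your proposal is a correct reconstruction of precisely that cited proof: you reduce (1), (2) and (3) to the single condition $I_d\cap L\,R_{d-1}\neq 0$ for general $L$, with the hypothesis $r\le\binom{n+d-1}{n-1}$ doing the double duty of forcing ``maximal rank'' to mean injectivity and keeping the expected dimension of the $(d-1)$-st osculating space at $\binom{n+d-1}{n}-1$, and you identify the kernel of the jet matrix with $I_d\cap\ell_p R_{d-1}$ via the apolarity map $\partial^{\beta}\mapsto\ell_p^{\,d-|\beta|}x^{\beta}$ (which is indeed a bijection onto a monomial basis of $\ell_p R_{d-1}$ after normalizing $p=[1:0:\cdots:0]$). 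This is essentially the same approach as the source the paper relies on, so nothing further is needed beyond the genericity bookkeeping you already flag.
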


\begin{proof} See \cite[Theorem 3.2]{MMO}.
\end{proof}

The above result motivated the following definitions:

\begin{defn}\rm Let $I \subset R$ be an artinian ideal generated by $r$ forms  of degree $d$, and $r \leq {n+d-1\choose n-1}$. We will say:
\begin{itemize}
\item[(i)] $I$ is a \emph{Togliatti system} if it satisfies one of three equivalent conditions in Theorem \ref{tea}.
\item[(ii)] $I$ is a \emph{monomial Togliatti system} if, in addition, $I$ can be generated
by monomials.
\item[(iii)] $I$ is a \emph{smooth Togliatti system} if, in addition, the rational variety $X$ is smooth.
\item[(iv)] A monomial Togliatti system $I$ is \emph{minimal} if  there is no proper subset of the set of generators defining a monomial Togliatti system.

\end{itemize}
\end{defn}

These definitions were introduced in \cite{MeMR} and the names are in honor of Eugenio Togliatti who proved that for $n = 2$ the only smooth
Togliatti system of cubics is $$I = (x_0^3,x_1^3,x_2^3,x_0x_1x_2)\subset k[x_0,x_1,x_2]$$ (see \cite{BK}, \cite{T1}[ and \cite{T2}). The systematic study of Togliatti systems was initiated in \cite{MMO} and for recent results the reader can see \cite{MeMR}, \cite{MeMR2}, \cite{MRM}, \cite{AMRV} and \cite{MS}.

In this paper, we will restrict our attention to a particular case of Togliatti systems, the so-called GT-systems introduced, for the case $N=3$, in \cite{MeMR2}. To define them we need to fix some extra notation. Fix $N\ge 3$, $d\ge 3$ and $e$ a primitive  $d$-th root of the unity. Since any representation of $\ZZ/d\ZZ$ in $GL(N,\ZZ)$ can be diagonalized, we can assume that it is represented by a matrix of the form
$$M:=M(\alpha_0,\alpha_1,\ldots,\alpha_{N-1})=\begin{pmatrix} e^{\alpha _0} & 0 & \cdots & 0 \\ 0 & e^{\alpha _1} &  \cdots & 0 \\
 &  & \cdots &  \\ 0 & 0 &\cdots & e^{\alpha _{N-1}}
\end{pmatrix}$$
with $gcd(\alpha_0,\alpha_1,\ldots,\alpha_{N-1},d)=1$.

\begin{defn}\rm Fix integers $3\le d\in \ZZ$ and $3\le N\in \ZZ$, $e$ a primitive $d$-th root of 1 and $ M(\alpha_0,\alpha_1,\ldots,\alpha_{N-1})$ a representation of $\ZZ/d\ZZ$ in $GL(N,\ZZ)$. A {\em GT-system} will be an ideal $$I^d_{\alpha_0,\ldots,\alpha _{N-1}}\subset k[x_0,x_1,\ldots,x_{N-1}]$$ generated by all forms of degree $d$ invariant under the action of a  $ M(\alpha_0,\alpha_1,\ldots,\alpha_{N-1})$.
\end{defn}

\begin{ex} \rm  Fix an odd integer
$3\le d=2k+1$. The  monomial artinian ideal
$I=(x_0^d,x_1^d, x_2^d,x_0x_1^{d-2}x_2 , x_0^2x_1^{d-4}x_2^2,\ldots ,
x_0^kx_1x_2^k)\subset K[x_0,x_1,x_2]$ defines a monomial GT-system. Indeed, let $e$ be a primitive root of order $d$ of  1 and consider the representation of the cyclic Galois group $\ZZ/d\ZZ$
 on $GL(3,\ZZ)$ given by the diagonal matrix $M(e^0, e^1, e^2)$. It is easy to check that $I$ is generated by all forms of degree $d$ invariant under the action of $M(e^0, e^1, e^2)$. Therefore, $I$ is a GT-system (Indeed, $I=I^d_{0,1,2}$).
Note that  for $d=3$ we recover the  smooth
Togliatti system of cubics.
\end{ex}

We easily check that a GT-system $I:=I^d_{\alpha_0,\ldots,\alpha _{N-1}}$ is always an artinian ideal since  $x_i^d\in I$ for $i=0,\ldots , N-1$. So, it defines a regular map
$$\psi _I:\PP^{N-1}\longrightarrow \PP^{\mu (I)}$$
where $\mu(I)$ denotes the minimal number of  generators of $I$. The morphism $\psi _I$ is a Galois cover of degree $d$ of the $N-1$ dimensional rational variety $\psi_I(\PP^{N-1})$ with cyclic Galois group $\ZZ/d\ZZ$ represented by the matrix $M(\alpha_0,\alpha_1,\ldots,\alpha_{N-1})$. Generalizing \cite{MeMR2}, Theorem 3.1 and  Proposition 3.2  from $N=2$ to arbitrary $N\ge 2$, we get:

\begin{prop}\label{prop:GTTogl} Fix integers $N\geq 3$ and  $d\geq N $, $e$ a primitive $d$-th root of 1 and
$M(\alpha_0,\alpha_1,\ldots,\alpha_{N-1})$
a representation of $\ZZ/d\ZZ$. Let $I$ denote  the ideal $I^d_{\alpha_0,\ldots,\alpha _{N-1}}\subset K[x_0,x_1,\ldots ,x_{N-1}]$ generated
by all forms of degree $d$ invariant
under the action of $M(\alpha_0,\alpha_1,\ldots,\alpha_{N-1})$. Assume $\mu(I)\le {N+d-2\choose N-2}$.  Then $I$
is a monomial artinian ideal which fails WLP in degree $d-1$ (i.e GT-systems are Togliatti systems).
\end{prop}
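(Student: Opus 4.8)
The plan is to identify a GT-system $I = I^d_{\alpha_0,\ldots,\alpha_{N-1}}$ explicitly as a monomial ideal, and then apply the criterion of Theorem \ref{tea} (specifically the equivalence (1)$\Leftrightarrow$(2)) to deduce failure of WLP in degree $d-1$. First I would describe the generators concretely: a monomial $x_0^{b_0}\cdots x_{N-1}^{b_{N-1}}$ of degree $d$ is invariant under $M(\alpha_0,\ldots,\alpha_{N-1})$ precisely when $M$ multiplies it by $e^{\alpha_0 b_0 + \cdots + \alpha_{N-1} b_{N-1}}$, so invariance amounts to the single congruence $\alpha_0 b_0 + \cdots + \alpha_{N-1} b_{N-1} \equiv 0 \pmod{d}$ subject to $b_0 + \cdots + b_{N-1} = d$. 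Since the space of invariant forms is spanned by monomials (the action is diagonal), $I$ is automatically a monomial ideal, and it is artinian because each pure power $x_i^d$ satisfies the congruence trivially and hence lies in $I$. This establishes the first assertion.

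The core of the argument is the failure of WLP in degree $d-1$, which by Theorem \ref{tea} is equivalent to showing that the $\mu(I)$ generating monomials $F_1,\ldots,F_{\mu(I)}$ become $k$-linearly dependent when restricted to a general hyperplane $H \subset \PP^{N-1}$ (here the hypothesis $\mu(I) \le \binom{N+d-2}{N-2}$ is exactly what is needed to invoke the theorem, since $\binom{(N-1)+d-1}{(N-1)-1} = \binom{N+d-2}{N-2}$). The key step is to produce this linear dependence from the group action itself. I would restrict to the general hyperplane $H = \{x_0 + x_1 + \cdots + x_{N-1} = 0\}$, or more cleanly work with the derivative/apolarity description: the generators of $I$, viewed through the Macaulay inverse system, all factor through the invariants of the induced $\ZZ/d\ZZ$-action on the hyperplane. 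The expected mechanism is that the monomials invariant under $M$ are exactly the image of a lower-dimensional Veronese-type parametrization composed with the Galois quotient map $\psi_I$, so that on a general hyperplane the number of independent invariant forms drops below $\mu(I)$, forcing dependence.

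The main obstacle I anticipate is verifying the linear dependence rigorously, i.e.\ showing that the restriction map from the span of the invariant monomials to functions on $H$ has a nontrivial kernel. The cleanest route is to exploit the factorization through the Galois cover directly: the invariant forms cut out the image variety $X = \psi_I(\PP^{N-1})$, and failure of WLP is equivalent (via Theorem \ref{tea}(3)) to $X$ satisfying a Laplace equation of order $d-1$. For a Galois cover with group $\ZZ/d\ZZ$ acting by the diagonal matrix $M$, one expects $X$ to be precisely a projected toric/monomial variety whose osculating behavior is degenerate, and the Laplace equation arises from the relation $\sum_i \alpha_i\, x_i \,\partial/\partial x_i$ annihilating all invariants after suitable interpretation. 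I would carry out this computation by checking that applying a fixed degree $d-1$ differential operator (built from the weights $\alpha_i$) to each invariant monomial yields zero, or by the dimension count on $H$ above, whichever the $N=2$ proof in \cite{MeMR2} suggests extends most transparently. The generalization from $N=2$ to arbitrary $N$ should be formal once the right invariant-theoretic formulation is in place, the only care being to track that the genericity of $H$ is compatible with the diagonal action after a linear change of coordinates.
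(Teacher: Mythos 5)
Your first paragraph is fine and matches the paper: the diagonal action forces the invariant forms of degree $d$ to be spanned by monomials, the pure powers $x_i^d$ are invariant, so $I$ is a monomial artinian ideal, and the numerical hypothesis $\mu(I)\le\binom{N+d-2}{N-2}$ is exactly what Theorem \ref{tea} requires with $n=N-1$. The gap is in the core of the argument: you correctly reduce the problem to producing a linear dependence of the generators on a hyperplane (equivalently, a nonzero element of the kernel of $\times\ell\colon [R/I]_{d-1}\to[R/I]_d$), but you never actually produce it. Your two candidate mechanisms do not work as stated. The ``dimension count on $H$'' is precisely the assertion to be proved, not a tool; and the operator $\sum_i \alpha_i x_i\,\partial/\partial x_i$ does \emph{not} annihilate an invariant monomial $x_0^{b_0}\cdots x_{N-1}^{b_{N-1}}$: it multiplies it by $\sum_i \alpha_i b_i$, which is a (typically nonzero) multiple of $d$, so no Laplace equation comes out of that relation. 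The missing idea is the norm-type construction that is the whole content of the paper's proof: take
$$F \;=\; \prod_{j=0}^{d-1}\bigl(e^{j\alpha_0}x_0+e^{j\alpha_1}x_1+\cdots+e^{j\alpha_{N-1}}x_{N-1}\bigr).$$
The action of $M(\alpha_0,\ldots,\alpha_{N-1})$ permutes the factors cyclically (it sends the $j$-th factor to the $(j+1)$-st), so $F$ is an invariant form of degree $d$ and hence $F\in I$. Peeling off the $j=0$ factor $\ell=x_0+\cdots+x_{N-1}$ gives $F_{d-1}$ of degree $d-1$ with $\ell\cdot F_{d-1}=F\in I$; since $I$ is generated in degree $d$ we have $[I]_{d-1}=0$, so $F_{d-1}\neq 0$ in $[R/I]_{d-1}$ and the multiplication map $\times\ell$ is not injective. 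Equivalently, in your formulation, expanding $F$ as a $k$-linear combination of the invariant monomial generators and restricting to $H=\{\ell=0\}$ gives the desired nontrivial dependence.

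Two smaller points you gloss over. First, $H=\{x_0+\cdots+x_{N-1}=0\}$ is a \emph{specific} hyperplane, not a general one, and non-injectivity for one linear form does not a priori imply it for a general one (injectivity is an open condition, so the implication goes the wrong way). The paper fixes this by citing \cite{MMN2}, Proposition 2.2: for \emph{monomial} ideals, maximal rank for a general linear form is equivalent to maximal rank for $x_0+\cdots+x_{N-1}$ (essentially because the diagonal torus preserves $I$ and rescales any linear form with all coefficients nonzero to $\ell$). You allude to this compatibility at the very end, but it must be invoked explicitly, and it is where the monomiality of $I$ is used. Second, one also needs the dimension count $\dim[R/I]_{d-1}=\binom{N+d-2}{N-1}\le \dim R_d-\mu(I)=\dim[R/I]_d$ (which follows from the hypothesis on $\mu(I)$) so that failure of injectivity really is failure of maximal rank; the paper records this before exhibiting $F_{d-1}$.
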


\begin{proof} To check that $I:=I^d_{\alpha_0,\ldots,\alpha _{N-1}}$ is a monomial ideal is a straightforward computation that is left to the reader. Let us check that $I$ fails WLP from degree $d-1$ to degree $d$.
Since $\mu(I)\le {N+d-1\choose N-1}$, we have
$$\begin{array}{rcl} \dim R_{d-1}& = & \dim [R/I]_{d-1}={N+d-2\choose N-1} \\
& = & {N+d-1\choose N-1}-{N+d-2\choose N-2} \\
& \le & \dim R_d-\dim \mu (I)= \dim[R/I]_{d}.\end{array}
 $$
So, to see that $I$ fails WLP from degree $d-1$ to degree $d$, we have to show that for any linear form $\ell \in R$ the induced map $\times \ell :[R/I]_{d-1}\longrightarrow [R/I]_d$ is not injective.
By \cite{MMN2}, Proposition 2.2, it is enough to check it for $\ell = x_0+x_1+\cdots +x_{N-1}$.
This is equivalent to prove that   there exists a form
$F_{d-1}\in R$ of degree $d-1$ such that $(x_0+x_1+\cdots +x_{N-1})\cdot F_{d-1}\in I$.
Consider $F_{d-1}=(e^{\alpha _0}x_0+e^{\alpha _1}x_1+\cdots
+e^{\alpha _{N-1}}x_{N-1})(e^{2\alpha _0}x_0+e^{2\alpha _1}x_1+\cdots +e^{2\alpha _{N-1}}x_{N-1})
\cdots
(e^{(d-1)\alpha _0}x_0+e^{(d-1)\alpha _1}x_1+\cdots +e^{(d-1)\alpha _{N-1}}x_{N-1}).$
The homogeneous form of degree $d$, $F=(x_0+x_1+\cdots +x_{N-1})\cdot F_{d-1}$ is invariant under the action of
$M(\alpha_0,\alpha_1,\ldots,\alpha_{N-1})$. Hence, it belongs to $I$ which  proves our result.
\end{proof}

\begin{rem}  \rm The hypothesis $\mu(I)\le {N+d-2\choose d-2}$ in Proposition 4.6 is often satisfied and can be dropped.  For instance we will see in Theorem \ref{thm_main2} that it is verified  when $d=N$ and $(\alpha_0,\alpha_1,\ldots,\alpha _{N-1})=(0,1,\ldots,N-1)$. 
\end{rem}

To determine the minimality of a GT-system $I^d_{\alpha_0,\ldots,\alpha _{N-1}}$ is  a subtle  problem that for $N=3$ was related in \cite{MeMR2} to the determinant of certain circulant matrices. This relation works for arbitrary $N\ge 3$. We quickly recall/generalize it (from $N=3$ to $N\ge 3$). Finally, as application of this relationship and of the results obtained in the previous section,
we will be able to prove or disprove  the minimality of $I^N_{0,1,\ldots,N-1}$.  Proving the minimality of the GT-system $I^d_{\alpha_0,\ldots,\alpha _{N-1}}$ is equivalent to proving that the monomials of degree $d$ invariant under the action of $M(\alpha_0,\alpha_1,\ldots,\alpha_{N-1})$ all appear with non-zero coefficient in the development of the product of linear forms
$(x_0+x_1+\cdots +x_{N-1})(e^{\alpha _0}x_0+e^{\alpha _1}x_1+\cdots
+e^{\alpha _{N-1}}x_{N-1})(e^{2\alpha _0}x_0+e^{2\alpha _1}x_1+\cdots +e^{2\alpha _{N-1}}x_{N-1})
\cdots
(e^{(d-1)\alpha _0}x_0+e^{(d-1)\alpha _1}x_1+\cdots +e^{(d-1)\alpha _{N-1}}x_{N-1}).$

For any integer $d\geq N$ and $0\le \alpha _0<\alpha _1<\cdots <\alpha _{N-1}\leq d$, we consider the $d\times d$ circulant matrix
\begin{equation*}A^d_{\alpha _0,\alpha _1,\ldots ,\alpha _{N-1}}=Circ(0,\ldots, 0,x_0,0,\ldots , 0, x_1,0, \ldots ,0,x_i,0, \ldots ,0, x_{N-1},0,\ldots,0)\end{equation*}
where  $x_i$ is in the position of index $\alpha _i$. According to (\ref{det_cir}) we have $$\det (A^d_{\alpha _0,\alpha _1,\ldots ,\alpha _{N-1}})= \prod _{j=0}^{d-1} (e^{j\alpha _0}x_0 + e^{j\alpha _1}x_1 +\cdots + e^{j\alpha _{N-1}}x_{N-1}).$$
 The determinant of $A^d_{\alpha _0,\alpha _1,\ldots ,\alpha _{N-1}}$ is therefore exactly  the product we are interested in and
we want to prove that all monomials of degree $d$ invariant under the action of $$M(\alpha_0,\alpha_1,\cdots,\alpha_{N-1})$$ appear with non-zero coefficient in $\det(A^d_{\alpha _0,\alpha _1,\ldots ,\alpha _{N-1}})$.

\begin{thm}\label{thm_main2} Fix an integer $N\ge 3$. The GT-system $I^N_{0,1,\ldots,N-1}\subset K[x_0,x_1,\ldots ,x_{N-1}]$  is a monomial Togliatti system. It is minimal if and only if $N$ is power of a prime integer.
\end{thm}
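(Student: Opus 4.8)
The plan is to treat the two assertions separately, reducing the minimality statement to Theorem \ref{mainthm}. Write $I:=I^N_{0,1,\ldots,N-1}$. First I would identify the minimal generators: by definition they are the degree-$N$ monomials $x_0^{a_0}\cdots x_{N-1}^{a_{N-1}}$ fixed by $M(0,1,\ldots,N-1)$, i.e. those with $a_0+\cdots+a_{N-1}=N$ and $\sum_{i=0}^{N-1} i\,a_i\equiv 0\pmod N$. Since $\sum_i(i+1)a_i=\sum_i i\,a_i+N$, this congruence is equivalent to $a_0+2a_1+\cdots+Na_{N-1}\equiv 0\pmod N$, so by (\ref{sol_p(n)}) the generators of $I$ are exactly the monomials occurring in $\per(Circ(x_0,\ldots,x_{N-1}))$. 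As they all have degree $N$, none divides another, whence they form a minimal generating set and $\mu(I)=p(N)$.

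For the Togliatti assertion I would apply Proposition \ref{prop:GTTogl} with $d=N$, whose sole hypothesis is the numerical bound $\mu(I)=p(N)\le{2N-2\choose N-2}$. I would deduce this from the closed formula (\ref{p(N)}): its leading term is $\frac1N{2N-1\choose N-1}=\frac{2N-1}{N(N-1)}{2N-2\choose N-2}$, which is strictly smaller than ${2N-2\choose N-2}$ for every $N\ge 3$, while the contribution of the proper divisors $k\mid N$ (with total weight $\sum_{d\mid N,\,d>1}\phi(d)=N-1$) is comfortably absorbed by the resulting slack once $N\ge 4$. The unique equality case is $N=3$, where $p(3)=4={4\choose 1}$, checked directly. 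With the bound in hand, Proposition \ref{prop:GTTogl} shows that $I$ fails the WLP in degree $N-1$, so $I$ is a monomial Togliatti system.

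For minimality I would use the criterion recalled just before the statement (generalizing \cite{MeMR2}): $I$ is minimal if and only if every monomial invariant under $M(0,1,\ldots,N-1)$ occurs with non-zero coefficient in the product $\prod_{j=0}^{N-1}(x_0+e^{j}x_1+e^{2j}x_2+\cdots+e^{j(N-1)}x_{N-1})$. The key observation is that when $d=N$ and $(\alpha_0,\ldots,\alpha_{N-1})=(0,1,\ldots,N-1)$ all $N$ index positions are occupied, so $A^N_{0,1,\ldots,N-1}=Circ(x_0,\ldots,x_{N-1})$; by (\ref{det_cir}) the product above is therefore exactly $\det(Circ(x_0,\ldots,x_{N-1}))$. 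Thus minimality is equivalent to the assertion that all $p(N)$ invariant monomials appear in the determinant expansion.

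Finally I would close the loop with Theorem \ref{mainthm}. Every monomial occurring in $\det(Circ(x_0,\ldots,x_{N-1}))$ also occurs in $\per(Circ(x_0,\ldots,x_{N-1}))$, and the latter monomials are precisely the $p(N)$ invariant ones; hence the determinant uses $d(N)$ of these $p(N)$ monomials, and all of them occur if and only if $d(N)=p(N)$. By Theorem \ref{mainthm} this holds precisely when $N=p^r$ is a prime power, which is the desired equivalence. The conceptual heart of the argument is Theorem \ref{mainthm}; within this proof the only genuinely technical points are the numerical bound $p(N)\le{2N-2\choose N-2}$, whose tightness at $N=3$ means no slack may be lost there, and the careful extension of the \cite{MeMR2} minimality criterion from $N=3$ to arbitrary $N\ge 3$.
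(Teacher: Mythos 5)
Your proposal is correct and follows essentially the same route as the paper: identify the generators as the $p(N)$ invariant (permanent) monomials, verify the numerical hypothesis $p(N)\le\binom{2N-2}{N-2}$ so that Proposition \ref{prop:GTTogl} yields the Togliatti property, identify $A^N_{0,1,\ldots,N-1}$ with $Circ(x_0,\ldots,x_{N-1})$ so that minimality becomes the statement $d(N)=p(N)$, and conclude by Theorem \ref{mainthm}. Your inequality argument is sketchier than the paper's (which bounds the proper-divisor terms via the largest proper divisor $N'$ and Pascal's rule) but is sound, and your observation that $\mu(I)=p(N)$ exactly is in fact cleaner than the paper's chain $\mu(I)\le d(N)\le p(N)$.
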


\begin{proof} 
In view of Proposition \ref{prop:GTTogl}, to prove that $I^N_{0,1,\ldots,N-1}$ is a Togliatti system we have to check that 
$\mu(I^N_{0,1,\ldots,N-1})\le {2N-2\choose N-2}$. But $\mu(I^N_{0,1,\ldots,N-1})\leq d(N)\leq p(N)$, so by (\ref{p(N)}) 
it is enough to check that  $$ \frac{1}{N} \sum_{k|N} \varphi(\frac{N}{k}) \binom{2k-1}{k} \leq \binom{2N-2}{N}.$$
 
 First we assume that $N$ is not prime and we consider its prime decomposition $N = q_{1}^{r_{1}} \cdots q_{s}^{r_{s}}$  with $q_{1} < q_{2} < \cdots < q_{s}$. So,
$N' = \frac{N}{q_{1}}$ is the greatest divisor of $N$ different from $N$, $N \geq 2N'$ and $N' \geq 2$.
Since $\varphi(1) = 1$ and $\sum_{k|N} \varphi(\frac{N}{k}) = N$, we can write:
$$\begin{array}{rcl} \frac{1}{N} \sum_{k|N} \varphi(\frac{N}{k}) \binom{2k-1}{k} & = & \frac{1}{N}(\sum_{k|N,k \neq N} \varphi(\frac{N}{k})\binom{2k-1}{k}
+ \binom{2N-1}{N})\leq\\    & \leq &
 \frac{N-1}{N}\binom{2N'-1}{N'} + \frac{1}{N}\binom{2N-1}{N}= \\ \\
&  = &  \frac{(2N'-1)(N-1)}{N(N'-1)} \binom{2N'-2}{N'} + \frac{2N-1}{N(N-1)}\binom{2N-2}{N}.
\end{array}
$$
Therefore,  $$\frac{1}{N} \sum_{k|N} \varphi(\frac{N}{k}) \binom{2k-1}{k}\leq \binom{2N-2}{N} $$
which is equivalent to
$$  \frac{2N'-1}{N'-1} \binom{2N'-2}{N'} \leq \frac{N^{2} - 3N + 1}{(N-1)^{2}} \binom{2N-2}{N}.$$
Using that $\frac{2N'-1}{N'-1} = 1 + \frac{N'}{N'-1} \leq 3$ for all $N' \geq 2$, it is enough to see the following inequality:
$$
\frac{3(N^{2} - 2N + 1)}{N^{2} - 3N + 1} \leq \frac{\binom{2N-2}{N}}{\binom{2N'-2}{N'}}.$$
Since $\frac{3(N^{2} - 2N + 1)}{N^{2} - 3N + 1} \leq 6$ for $N \geq 4$, it only remains to check that $\binom{2N-2}{N}\ge 6\binom{2N'-2}{N'}$. By the Pascal's rule $\binom{n+1}{k} = \binom{n}{k} + \binom{n}{k-1}$, it suffices to verify that $\binom{4N'-2}{2N'}\ge 6\binom{2N'-2}{N'}$.  Applying consecutively the Pascal's rule,
we obtain:
$$\binom{4N'-2}{2N'} = \binom{4N'-3}{2N'} + \binom{4N'-3}{2N'-1} = \cdots = \binom{4N'-5}{2N'} + 3\binom{4N'-5}{2N'-1} + 4\binom{4N'-5}{2N'-2}.$$
Since $2N'-3 \geq N'-2$, we have $\binom{4N'-5}{2N'-2} = \binom{2N'-2 + 2N'- 3}{N'+ N'-2} \geq \binom{2N'-2 + N'-1}{N'} \geq \binom{2N'-2}{N'}$. Similarly,
$\binom{2N'-2 + 2N' - 3}{N' + N' - 1} \geq \binom{2N'-2 + N'-2}{N'} \geq \binom{2N'-2}{N'}$. Summing up, $\binom{4N'-2}{2N'} \geq 7\binom{2N'-2}{N'} + \binom{4N'-5}{2N'}$ which implies
$\binom{4N'-2}{2N'}\ge 6\binom{2N'-2}{N'}$.

\vskip 2mm
If $N$ is prime,  $\frac{1}{N} \sum_{k|N} \varphi(\frac{N}{k}) \binom{2k-1}{k}  = \frac{N-1}{N} + \frac{1}{N}\binom{2N-1}{2N}
= \frac{N-1}{N} + \frac{2N-1}{N(N-1)} \binom{2N-2}{2N}$. In this case, the expected inequality becomes
$\frac{N^{2} - 2N + 1}{N^{2} - 3N + 1} \leq \binom{2N - 2}{N}$, which can be easily reduced to verify that
$\binom{2N-2}{N} \geq 2$, since $\frac{N^{2} - 2N + 1}{N^{2} - 3N + 1} \leq 2$ for any $N \geq 3$. The result follows directly from
the growth of the binomial coefficients, or simply observing the Pascal's triangle.

It remains to prove that $I^N_{0,1,\ldots,N-1}$ is a minimal Togliatti system. First of all we observe that a monomial $m=x_0^{i_0}x_1^{i_1}\cdots x_{N-1}^{i_{N-1}}$ with $i_j\ge 0$ and $\sum _{j=0}^{N-1}i_j=N$ belongs to  $I^N_{0,1,\ldots,N-1}$ if and only if $i_0+2i_1+3i_2+\cdots +Ni_{N-1}\equiv 0 \pmod{N}$. Therefore, the number of generators of $I^N_{0,1,\ldots,N-1}$ is equal to $d(N) =\frac{1}{N}\sum _{d|N }\phi(\frac{N}{d}){2N-1\choose N}$ and the GT-system $I^N_{0,1,\ldots,N-1}$  will be minimal if $d(N)$ coincides
with
the number of non-zero coefficients in the development of the product of linear forms
$(x_0+x_1+\cdots +x_{N-1})
(x_0+ex_1+\cdots +e^{N-1}x_{N-1})
(x_0+e^{2}x_1+\cdots +e^{2(N-1)}x_{N-1})
\cdots
(x_0+e^{d-1}x_1+\cdots +e^{(d-1)(N-1)}x_{N-1}).$ In other words, $I^N_{0,1,\ldots,N-1}$ will be minimal if and only if $d(N)=p(N)$ and, by Theorem \ref{mainthm}, if and only if $N$ is a power of a prime integer.
\end{proof}

\begin{prop} Fix  integers $N\ge 3$ and $d\ge N$.  If $d$ is a power of a prime then the GT-system $I^d_{0,1,\ldots,N-1}\subset K[x_0,x_1,\ldots ,x_{N-1}]$  is minimal.
\end{prop}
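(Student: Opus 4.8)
The plan is to reduce the minimality of $I^d_{0,1,\ldots ,N-1}$ to the non-vanishing statement already available for circulant determinants of prime-power size. Recall from the discussion preceding Theorem \ref{thm_main2} that minimality of the GT-system $I^d_{0,1,\ldots ,N-1}$ is equivalent to the assertion that every monomial of degree $d$ invariant under $M(0,1,\ldots ,N-1)$ appears with non-zero coefficient in the development of $\det(A^d_{0,1,\ldots ,N-1})$. Since $M(0,1,\ldots,N-1)$ is diagonal with entries $e^{j}$, a degree-$d$ monomial $x_0^{i_0}\cdots x_{N-1}^{i_{N-1}}$ is invariant precisely when $0\cdot i_0+1\cdot i_1+\cdots +(N-1)i_{N-1}\equiv 0\pmod d$. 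So it suffices to show that each such monomial occurs with non-zero coefficient in $\det(A^d_{0,1,\ldots ,N-1})$.

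The key observation is that $A^d_{0,1,\ldots ,N-1}=Circ(x_0,x_1,\ldots ,x_{N-1},0,\ldots ,0)$ is obtained from the generic $d\times d$ circulant matrix $Circ(y_0,y_1,\ldots ,y_{d-1})$ by the specialization $y_i\mapsto x_i$ for $0\le i\le N-1$ and $y_i\mapsto 0$ for $N\le i\le d-1$. Consequently
$$\det(A^d_{0,1,\ldots ,N-1})=\det(Circ(y_0,\ldots ,y_{d-1}))\big|_{y_N=\cdots =y_{d-1}=0},$$
and the monomials surviving this specialization are exactly those $y_0^{M_0}\cdots y_{d-1}^{M_{d-1}}$ appearing in $\det(Circ(y_0,\ldots ,y_{d-1}))$ with $M_N=\cdots =M_{d-1}=0$. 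Moreover, each survivor keeps its original coefficient, since distinct monomials not involving $y_N,\ldots ,y_{d-1}$ remain distinct after the substitution and no merging can occur.

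The last step is to invoke Corollary \ref{improve-coefI} for the size $d$, which is legitimate because $d$ is a power of a prime. Given an invariant generator $x_0^{i_0}\cdots x_{N-1}^{i_{N-1}}$ of $I^d_{0,1,\ldots ,N-1}$, set $M_i:=i_i$ for $i<N$ and $M_i:=0$ for $i\ge N$; then $M_0+\cdots +M_{d-1}=d$ and $\sum_{i=0}^{d-1} iM_i=\sum_{j=0}^{N-1} j\,i_j\equiv 0\pmod d$. Hence, by Corollary \ref{improve-coefI}, the coefficient of the corresponding degree-$d$ monomial in $\det(Circ(y_0,\ldots ,y_{d-1}))$ is non-zero. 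Since this monomial does not involve any of $y_N,\ldots ,y_{d-1}$, it survives the specialization with the same non-zero coefficient and therefore appears in $\det(A^d_{0,1,\ldots ,N-1})$. As the generator was arbitrary, every invariant monomial of degree $d$ occurs with non-zero coefficient, which is exactly the minimality criterion.

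The argument is thus essentially bookkeeping once the specialization is identified; the entire weight rests on Corollary \ref{improve-coefI} (equivalently Theorem \ref{mainthm}), whose prime-power hypothesis is met here by $d$ rather than by $N$. The only points requiring care are to check that the invariance congruence of the $N$-variable monomial matches the congruence $\sum_i iM_i\equiv 0\pmod d$ that governs non-vanishing in the $d\times d$ circulant determinant, and that passing to the specialization neither merges monomials nor alters the surviving coefficients; both are immediate from the remarks above.
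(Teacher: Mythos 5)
Your proof is correct and is essentially the paper's own argument: both rest on viewing $\det(A^d_{0,1,\ldots,N-1})$ as the specialization $y_N=\cdots=y_{d-1}=0$ of the generic $d\times d$ circulant determinant, noting that surviving coefficients are unchanged, and then invoking the prime-power non-vanishing result (Corollary \ref{improve-coefI}, equivalently Theorem \ref{mainthm}) applied to $d$. The paper merely states this contrapositively in one sentence, whereas you spell out the direct implication in detail.
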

\begin{proof} Indeed, if the coefficient of an invariant monomial is zero in $$I^d_{0,1,\ldots,N-1}\subset K[x_0,x_1,\ldots ,x_{N-1}]$$ it is zero also in the ideal in $d$ variables hence $d$ is not a power of prime.
\end{proof}

We end this note with a conjecture based on Theorem \ref{thm_main2}, on our results in \cite{MeMR2}, and on many examples computed with Macaulay2 \cite{M}.

\begin{conj} \rm
Fix an integer $d\ge 3$ and integers $1\le n<m\le d-1$ such that $gcd(n,m,d)=1$. Then, the GT-system $I^d_{0,n,m}\subset k[x,y,z]$ is minimal.
\end{conj}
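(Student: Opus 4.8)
To prove the conjecture, the plan is to recast minimality as a non-vanishing statement about the coefficients of a circulant determinant and then to attack that statement with the tools of Section~\ref{permanent}. By the criterion recalled above, $I^d_{0,n,m}$ is minimal if and only if every invariant monomial $x^ay^bz^c$ (that is, with $a+b+c=d$ and $nb+mc\equiv 0\pmod d$) appears with non-zero coefficient in
$$\det(A^d_{0,n,m})=\prod_{j=0}^{d-1}\bigl(x+e^{jn}y+e^{jm}z\bigr).$$
I would read this determinant as a specialization of the full generic circulant determinant: substituting $x_0=x$, $x_n=y$, $x_m=z$ and $x_i=0$ for every other $i$ turns $Circ(x_0,\dots,x_{d-1})$ into $A^d_{0,n,m}$, and under this substitution the coefficient of $x^ay^bz^c$ in $\det(A^d_{0,n,m})$ equals the coefficient of $x_0^ax_n^bx_m^c$ in $\det(Circ(x_0,\dots,x_{d-1}))$ (the surviving monomials are exactly those supported on $\{0,n,m\}$, and distinct ones do not collapse). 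Thus the conjecture is equivalent to the statement that \emph{no} coefficient of $\det(Circ(x_0,\dots,x_{d-1}))$ attached to a monomial supported on the three positions $\{0,n,m\}$ vanishes.

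This reformulation settles at once the case in which $d$ is a power of a prime: by Corollary~\ref{improve-coefI} every coefficient whose monomial satisfies $nb+mc\equiv 0\pmod d$ is non-zero, so $I^d_{0,n,m}$ is minimal. The content of the conjecture therefore lies in the case where $d$ is not a prime power, in which Theorem~\ref{mainthm} guarantees that $\det(Circ(x_0,\dots,x_{d-1}))$ does have vanishing coefficients; one must show that none of these vanishings ever occurs at a triple of positions $\{0,n,m\}$ with $\gcd(n,m,d)=1$. To bring the hypothesis into play, I would first normalize the weights: replacing the index $j$ by $sj$ for a unit $s\in(\ZZ/d\ZZ)^*$ merely permutes the $d$ factors of the product and so preserves all coefficients, whence the problem for $(0,n,m)$ is equivalent to the one for the weights $(0,sn,sm)$ reduced modulo $d$ (after reordering the two nonzero weights). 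Since $\gcd(sn,d)=\gcd(n,d)$, this reduces to $n=1$ whenever $\gcd(n,d)=1$ (and symmetrically to $m=1$ when $\gcd(m,d)=1$); the genuinely new configuration is $\gcd(n,d)>1$ and $\gcd(m,d)>1$ with $\gcd(n,m,d)=1$, which I would analyze through the prime-power factors of $d$.

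For the remaining non-vanishing I would compute the coefficient explicitly. Expanding the product, the coefficient of $x^ay^bz^c$ is the character sum
$$c_{abc}=\sum_{(R,T,U)}e^{\,n\,\sigma(T)+m\,\sigma(U)}=\sum_{k=0}^{d-1}N_k\,e^{k},$$
where the first sum runs over the ordered partitions of $\{0,\dots,d-1\}$ into blocks $R,T,U$ of sizes $a,b,c$, where $\sigma(S)=\sum_{j\in S}j$, and where $N_k\ge 0$ counts the partitions with $n\sigma(T)+m\sigma(U)\equiv k\pmod d$. The task becomes to show that the non-negative integer vector $(N_0,\dots,N_{d-1})$ never represents a vanishing sum of $d$-th roots of unity. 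Here I would invoke the structure theory of such vanishing sums (Lam--Leung, Conway--Jones), according to which a vanishing sum with non-negative integer multiplicities must decompose into the elementary relations $\sum_{i=0}^{p-1}\zeta_p^{\,i}=0$ attached to the prime divisors $p\mid d$, and I would try to prove that $\gcd(n,m,d)=1$ forces the multiplicities $N_k$ to be incompatible with every such decomposition.

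I expect this last step to be the main obstacle, and it is exactly what keeps the statement a conjecture: as emphasized throughout the paper, no general criterion is known for deciding when a coefficient of $\det(Circ(x_0,\dots,x_{d-1}))$ vanishes. The explicit vanishing patterns at our disposal (Proposition~\ref{key}, from Malenfant's Corollary~6) are built from a block of repeated indices $0$ and $1$ together with three further entries and, in every instance we have computed (for example the $N=6$ list in Remark~\ref{rem_circ}), are supported on four or more distinct positions; they thus provide strong evidence that the sparse $3$-support coefficients never vanish, but no mechanism for proving it. A complete proof seems to require either a closed evaluation of $c_{abc}$ special to three variables or a new non-vanishing criterion that turns the coprimality $\gcd(n,m,d)=1$ into an obstruction to the prime-level cancellations above.
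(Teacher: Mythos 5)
You have not proved this statement, and neither does the paper: it is stated there as an open conjecture, supported only by Theorem \ref{thm_main2}, the results of the earlier paper on $N=3$, and Macaulay2 experiments. Your proposal is honest about this, and the parts you do establish are correct and worth recording: the reformulation of minimality as non-vanishing of the coefficients of $\det(A^d_{0,n,m})$, the observation that this determinant is the specialization $x_0=x$, $x_n=y$, $x_m=z$, $x_i=0$ otherwise of the generic circulant determinant (so that coefficients supported on $\{0,n,m\}$ are preserved, with no collapsing), the disposal of the prime-power case via Corollary \ref{improve-coefI} (this is exactly the specialization argument the paper itself uses in the Proposition following Theorem \ref{thm_main2}), and the rescaling $j\mapsto sj$ by a unit $s\in(\ZZ/d\ZZ)^*$, which permutes the factors of the product and reduces $(0,n,m)$ to $(0,sn,sm)$.

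The genuine gap is the one you name yourself: for $d$ not a prime power, Theorem \ref{mainthm} guarantees that vanishing coefficients of $\det(Circ(x_0,\ldots,x_{d-1}))$ exist, and nothing in the paper rules out that such a vanishing occurs at a monomial supported on three positions $\{0,n,m\}$ with $\gcd(n,m,d)=1$; Proposition \ref{key} and the $N=6$ data of Remark \ref{rem_circ} produce vanishings only on larger supports, which is evidence, not proof. One caution about your proposed route through vanishing sums of roots of unity: the structure theory is weaker than you state. Lam--Leung controls only the \emph{number} of terms (it lies in $\N p_1+\cdots+\N p_s$ for the primes $p_i\mid d$), and the decomposition of a non-negative vanishing sum into rotated prime cycles $\zeta_d^{j}(1+\zeta_p+\cdots+\zeta_p^{p-1})$ holds with non-negative \emph{rational} multiplicities, not integer ones; moreover, when $d$ has three or more distinct prime factors there exist minimal vanishing sums that are not rotated prime cycles (Conway--Jones). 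So even granting your character-sum expression $c_{abc}=\sum_k N_k e^k$, incompatibility of the vector $(N_0,\ldots,N_{d-1})$ with all such decompositions is a substantially harder statement than your sketch suggests. In short: your reductions are sound and consistent with everything the paper proves, but the conjecture remains open after your argument exactly where it was open before it.
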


  %%%%%%%%%%%%%%%%%%%%%%%%%%%%%%%%%%%%%%%%%%%%%

%%%%%%%%%%%%%%%%%%%%%%%%%%%%%%%%%%%%%%%%%%%%%%%%%%


\begin{thebibliography}{ll}

\bibitem{AMRV} C. Almeida, Aline V. Andrade and R.M. \ Mir\'o-Roig, {\em Gaps in the number of generators of monomial Togliatti systems}, Preprint.

\bibitem{BK} H.\ Brenner and A.\ Kaid, {\em Syzygy bundles on $\mathbb P^2$
and the Weak Lefschetz Property},  Illinois J.\ Math.\ {\bf 51}
(2007),  1299--1308.

\bibitem{BN} R. \ Brualdi and M. \ Newman, {\em An enumeration problem for a congruence equation}, J. Res. Nat.
Bur. Standards Sect. B {\bf 74B} (1970), 37-40.


\bibitem{M}
D.R.~Grayson and M.E.~ Stillman,
 {\em Macaulay2, a software system for research
                   in algebraic geometry},
  {Available at http://www.math.uiuc.edu/Macaulay2/}





\bibitem{MMO} E.\ Mezzetti, R.M. \ Mir\'o-Roig and G.\ Ottaviani,
  \emph{Laplace Equations and the Weak Lefschetz Property}, Canad.
J. Math. {\bf 65} (2013), 634--654.

\bibitem{MeMR} E.\ Mezzetti and R.M. \ Mir\'o-Roig, \emph{The minimal
number of generators of a  Togliatti system}, Annali di Matematica Pura ed Applicata {\bf 195} (2016), 2077-2098. DOI:10.1007/s10231-016-0554-y.

\bibitem{MeMR2} E.\ Mezzetti and R.M. \ Mir\'o-Roig. \emph{Togliatti systems and Galois coverings}, J. Algebra {\bf 509} (2018), 263-291. DOI: 10.1016/j.jalgebra.2018.05.014


\bibitem{MRM} M.  Micha{\l}ek and R.M.\ Mir\'o-Roig, {\em Smooth
monomial Togliatti systems of cubics}, Journal of
Combinatorial Theory, Ser. A    {\bf 143} (2016), 66-87. http://dx.doi.org
/10.1016/j.jcta.2016.05.004.







\bibitem{MMN2} J.\ Migliore, R.M.\ Mir\'o-Roig and U.\ Nagel,
\emph{Monomial ideals, almost complete intersections, and the Weak
Lefschetz property}, Trans. Amer. Math. Soc. \textbf{363} (2011), no. 1,
229--257.


\bibitem{MS}  R.M. \ Mir\'o-Roig and M. Salat, \emph{On the classification of Togliatti systems}, Comm. Algebra, {\bf 46:6} (2017), 2459-2475, DOI: 10.1080/00927872.2017.1388813r.

\bibitem{KS} I.\ Kra and S.R.\ Simanca, \emph{On circulant matrices},
Notices AMS {\bf 59} (3) (2012), 368--377.

\bibitem{LWW} N.A. \ Loehr, G.S. \ Warrington, H.S. \
Wil, {\em The Combinatorics of a Three-Line Circulant Determinant}, Israel Journal of Mathematics {\bf 143} (2004) 141--156.

\bibitem{Malenf} J. \  Malenfant, {\em
On the Matrix-Element Expansion of a Circulant Determinant}, arXiv:1502.06012v5


\bibitem{T} H.\  Thomas, {\em The number of terms in the permanent and the determinant of a generic circulant matrix},  J. Algebraic Combin. 20 (2004), no. 1, 55–60. 

\bibitem{W}A. \  Wyn-Jones,  {\em Circulants}, Chapters 10 and 11. www.circulants.org/circ/circall.pdf.


     \bibitem{T1} E.\ Togliatti, {\em Alcuni esempi di superfici algebriche
     degli iperspazi che rappresentano un'equazione di Laplace},
     Comm. Math. Helvetici {\bf 1} (1929), 255--272.

     \bibitem{T2} E.\ Togliatti, {\em Alcune osservazioni sulle
     superfici razionali che rappresentano equazioni di Laplace},
Ann. Mat. Pura Appl. (4)
      {\bf  25} (1946) 325--339.

\bibitem{V} L.\ G.\ Valiant, {\em The complexity of computing the permanent}, Theoret. Comput.
Sci. {\bf 8} (1979) 189--201.




\end{thebibliography}
\end{document}